\newcommand{\N}{\mathbb{N}}
\newcommand{\Z}{\mathbb{Z}}
\newcommand{\Q}{\mathbb{Q}}
\newcommand{\R}{\mathbb{R}}
\newcommand{\C}{\mathbb{C}}
\newcommand{\Gal}{\mathrm{Gal}}
\newcommand{\tors}{\mathrm{tors}}
\newcommand{\poubelle}[1]{}
\theoremstyle{plain}
\newtheorem{thm}{Theorem}[section]
\newtheorem{prop}[thm]{Proposition}
\newtheorem{cor}[thm]{Corollary}
\newtheorem{lmm}[thm]{Lemma}
\newtheorem{conj}[thm]{Conjecture}
\newtheorem{rqu}[thm]{Remark}
\newtheorem*{thm*}{Theorem}
\newtheorem*{lmm*}{Lemma}
\begin{document}
\poubelle{\linenumbers}

\title[Bogomolov property of nonabelian extensions]{Bogomolov property of some infinite nonabelian extensions of a totally $v$-adic field}
\date{}
\author{Arnaud Plessis}
\address{Arnaud Plessis: Academy of Mathematics ans Systems Science, Morningside Center of Mathematics, Chinese Academy of Sciences, Beijing 100190}
\email{plessis@amss.ac.cn}
\maketitle 

\begin{abstract}
Let $E$ be an elliptic curve defined over a number field $K$, and let $v$ be a finite place of $K$. 
Write $K^{tv}$ for the maximal totally $v$-adic field, and denote by $L$ the field generated over $K^{tv}$ by all torsion points of $E$. 
Under some conditions, we will show that the absolute logarithmic Weil height (resp. Néron-Tate height) of any element of $L$ (resp. $E(L)$) is either $0$ or bounded from below by a positive constant depending only on $E,K$ and $v$. 
This constant will be explicit in the toric case.
\end{abstract}

\section{Introduction}
Let $h : \overline{\Q} \to \R$ denote the (absolute, logarithmic) Weil height. 
It is a non-negative function vanishing precisely at $\mu_\infty$, the set of all roots of unity, and $0$ by a theorem of Kronecker. 
It satisfies $h(\alpha^n)= \vert n \vert h(\alpha)$ and $h(\zeta\alpha)=h(\alpha)$ for all $\alpha\in \overline{\Q}, \zeta\in \mu_\infty$ and all $n\in \Z$  as well as the inequality $h(\alpha\beta)\leq h(\alpha)+h(\beta)$ for all $\alpha,\beta\in \overline{\Q}$. 
For further information on this height, we refer to \cite{BombieriandGubler}.

Given a field $K\subset \overline{\Q}$, an interesting question is whether there exists a positive constant $c$ such that $h(\alpha) \geq c$ for all non-zero $\alpha\in K\backslash \mu_\infty$.
Such a field is said to have the Bogomolov property.
This notion was introduced  by Bombieri and Zannier \cite{BombieriZannier}. 
The field $\overline{\Q}$ does not have the Bogomolov property since $h(2^{1/n})=(\log 2)/n \to 0$. 

By Northcott's theorem, each number field has the Bogomolov property. 
Schinzel gave the first example of an infinite extension of $\Q$ having the Bogomolov property \cite{Schinzel}, namely the maximal totally real field extension $\Q^{tr}$ of $\Q$.  
The $p$-adic version of this theorem was proved by Bombieri and Zannier \cite{BombieriZannier}. 
More precisely, they proved that the maximal totally $p$-adic extension $\Q^{tp}$ of $\Q$ has the Bogomolov property. 

In recent years, the study of this property mushroomed, see for example \cite{AmorosoDvornicich, AmorosoZannier10, Habegger, AmorosoDavidZannier, Grizzard, FiliMilner, Galateau, Frey, Plessismino, PazukiPengo}.

The study of this property is not limited to this situation and we can easily define it for abelian varieties. 
Let $A$ be an abelian variety defined over a number field $K$, and let $\mathcal{L}$ be a symmetric ample line bundle on $A/K$.
Let $\hat{h}_A : A(\overline{K}) \to \R$ denote the Néron-Tate height attached to $\mathcal{L}$. 
It is a non-negative function vanishing precisely at $A_\tors$, the group of torsion points of $A$. 
Again, given a field $L\subset \overline{K}$, the group $A(L)$ is said to have the Bogomolov property (with respect to $\mathcal{L}$) if there exists a positive constant $c$ such that $\hat{h}_A(P) \geq c$ for all $P\in A(L)\backslash A_\tors$. 
It is well known that $A(\overline{K})$ does not have the Bogomolov property. 

Northcott's theorem cited above also states that $A(L)$ has the Bogomolov property if $L$ is a number field. 
Zhang showed the abelian analogue of Schinzel's theorem, that is, $A(\Q^{tr})$ has the Bogomolov property \cite{Zhang}.  
Later, Baker and Petsche proved that $A(\Q^{tp})$ has the Bogomolov property when $p>2$ and $A/\Q$ is an elliptic curve with semistable reduction at $p$ \cite[Theorem 6.6]{BakerPetsche}.
For more examples concerning the Bogomolov property in the case of an abelian variety, see \cite{BakerSilverman, PazukiPengo} (which handle the case of any abelian variety) and \cite{Baker, Silverman, Habegger, Pottmeyer, Plessisiso} (which treat the special case of an elliptic curve). 

A very special case of a recent conjecture due to the author predicts the following. 

\begin{conj} [\cite{Plessisiso}, Conjecture 1.4] \label{conj0}
Let $A$ be an abelian variety defined over a number field $K$, let $\mathcal{L}$ be a symmetric ample line bundle on $A/K$, and let $L/K$ be a finite extension. 
Then $L(A_\tors)$ and $A(L(A_\tors))$ have the Bogomolov property.  
\end{conj}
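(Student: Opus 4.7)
Since Conjecture \ref{conj0} is not currently known in full generality, what follows is a strategy rather than a worked-out proof. The first step is to reduce to the case $L=K$: the extension $L(A_\tors)/K(A_\tors)$ is finite, and a Northcott-plus-averaging argument in the style of Bombieri--Zannier transfers property $(B)$ through finite extensions, both for the Weil height on the multiplicative group and for $\hat h_A$. So I would focus on showing that $K^*\backslash\mu_\infty$ and $A(K(A_\tors))\backslash A_\tors$ are bounded below, where here I abbreviate $K=K(A_\tors)$.

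The template is Amoroso--Dvornicich for $\Q(\mu_\infty)$ and Habegger for $\Q(E_\tors)$. I would choose a finite place $v$ of the base field where $A$ has good ordinary reduction (such places have positive density by work of Pink/Serre combined with the Chebotarev argument on the Galois representation). Let $F$ be the completion at $v$, with residual characteristic $p$. Good ordinary reduction yields the Serre--Tate filtration
\[0 \to T_p(\widehat{A}) \to T_pA \to T_p(A/\widehat{A}) \to 0,\]
where the submodule is (after a Tate twist) isomorphic to $\Z_p(1)^g$ and the quotient is unramified. Consequently the local field $F(A[p^\infty])$ lies in a tower $F \subset F^{\mathrm{unr}} \subset F^{\mathrm{unr}}(A[p^\infty])$ whose topmost step is essentially cyclotomic, and the inertia action factors through a known solvable group.

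With this in hand, the multiplicative statement would be attacked by an Amoroso--Dvornicich-style local estimate at a place $w\mid v$ of $K(A_\tors)$: the Newton polygon of any $\alpha\in K(A_\tors)^*\backslash\mu_\infty$ at $w$ has controlled slopes because the ramification of the local tower is cyclotomic up to an unramified piece, so either the local contribution to $h(\alpha)$ is bounded below by a positive absolute constant, or the Frobenius at $w$ produces many distinct conjugates whose sum of local heights does. For $\hat h_A$ one replaces the logarithm of $\mathbb G_m$ by the formal logarithm of the ordinary formal group $\widehat A$; the Néron-function decomposition isolates the local term at $w$, and the formal-group parameter of a non-torsion point must have small but nonzero valuation, from which one extracts the lower bound.

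The step I expect to be the main obstacle is the higher-ramification analysis of $F(A[\ell^\infty])/F$ for $\ell\neq p$, which is what is really needed to control the global tower $K(A_\tors)/K$ rather than just its $p$-part. In Habegger's $g=1$ case one leverages the fact that $\mathrm{GL}_2(\Z_\ell)$ is small enough in the sense of Sen for the upper-numbering filtration to be pinned down by the image of a single Frobenius; for $g\ge2$ the image of Galois in $\mathrm{GL}_{2g}(\Z_\ell)$ is substantially larger and the ramification filtration of its image is not well understood. Bridging this gap --- perhaps through $p$-adic Hodge theory for $A$, or through an equidistribution input on $A$ replacing Bilu's theorem on $\mathbb G_m$ --- is, to my mind, the real reason Conjecture \ref{conj0} is open beyond the elliptic case, and where genuinely new ideas are needed.
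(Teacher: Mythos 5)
This statement is labeled a \emph{conjecture} in the paper and is not proved there in full generality; you recognize this, so your sketch should be read as a research programme. Comparing it with the strategies the paper actually uses to prove the known cases (Theorem~\ref{thm2}, Theorem~\ref{thm3}, and Habegger's and Frey's antecedents), I see two substantive problems.

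The first is your opening reduction to $L=K$. You assert that ``a Northcott-plus-averaging argument in the style of Bombieri--Zannier transfers property $(B)$ through finite extensions.'' No such transfer principle is known: whether property $(B)$ survives even a single finite extension of a field possessing it is open in general, and this is precisely the subject of Grizzard's \emph{Relative Bogomolov extensions} cited in the bibliography. The Bombieri--Zannier argument concerns totally $p$-adic fields and exploits the splitting of one place, not finiteness of the extension. The way the paper handles the freedom in $L$ is different: it fixes a place $v$ with good splitting behaviour and runs the whole argument over $K^{tv}$ (Lemma~\ref{lmm1} and Lemma~\ref{lmm1.5}), so that the finite extension is absorbed into the totally $v$-adic base from the start.

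The second, and more serious, divergence is your choice of good \emph{ordinary} reduction. Every proof in this paper, and in Habegger and Frey, insists on a \emph{supersingular} prime: this is condition $(P_1)$ and item $(a)$ of Theorem~\ref{thm3}. Supersingularity is not incidental. At a supersingular prime the representation $\Gal(\Q_{p^2}(E[p^n])/\Q_{p^2})\to\mathrm{Aut}\,E[p^n]$ has inertia landing in a non-split Cartan, so its image contains all scalars and acts transitively on points of exact order $p^n$ (Lemma~\ref{lmm2.5}(i)); this is exactly what produces the central automorphism $\sigma$ ``multiplication by $2$'' in Lemma~\ref{lmm3}, which drives the Kummerian descent. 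At an ordinary prime the connected--\'etale sequence makes $E[p^n]$ a Galois-\emph{stable} filtered object; inertia acts triangularly, does not contain the scalar matrices, and is not transitive, so the entire descent mechanism evaporates. Moreover, your claim that ordinary reduction leaves the local tower ``essentially cyclotomic up to an unramified piece'' overlooks the Serre--Tate parameter: the extension class of the ordinary filtration contributes a genuinely ramified Kummer-type layer to $F(A[p^\infty])/F^{\mathrm{unr}}(\mu_{p^\infty})$, so the Newton-polygon estimate in the Amoroso--Dvornicich style (which is calibrated for abelian/cyclotomic-type ramification) does not apply as stated. Your closing diagnosis --- that the real obstacle for $g\ge 2$ is our ignorance of the ramification of the Galois image in $\mathrm{GL}_{2g}(\Z_\ell)$ --- is reasonable, but you should add that the approach actually used here also founders for $g\ge 2$ because the mere \emph{existence} of supersingular primes for a general non-CM abelian variety over a number field is conjectural, as the paper itself notes for an elliptic curve over $\Q(i)$.
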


\begin{rqu}
\rm{The abelian part of this conjecture is due to David.}
\end{rqu}

This conjecture was proved to be true when $A$ has complex multiplication (CM). 
More precisely, the toric part is due to Amoroso, David and Zannier (see Theorem \ref{thm5} below for a more general statement) and the abelian part was proved by Baker and Silverman \cite[Section 9]{BakerSilverman}, see also \cite[Théorème 1.8]{Carrizosa}. 

The case where $A$ has no CM is much harder. 
Up to my knowledge, Habegger was the first one to provide a result going in the direction of Conjecture \ref{conj0}.

\begin{thm} [Habegger, \cite{Habegger}] \label{thm-1}
Let $E/\Q$ be an elliptic curve. 
Then $\Q(E_\tors)$ and $E(\Q(E_\tors))$ have the Bogomolov property. 
\end{thm}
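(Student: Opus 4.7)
My strategy is to reduce both assertions to a local analysis at a carefully chosen supersingular prime, exploiting the rigidity of the image of Galois on $E_\tors$. By a theorem of Elkies, there exist infinitely many rational primes $p$ at which $E$ has supersingular reduction. At such a prime, the image of the local Galois representation on the $p$-adic Tate module $T_p E$ lies, up to finite index, in the normalizer of a non-split Cartan of $\mathrm{GL}_2(\Z_p)$; in particular, although $\Q_p(E[p^\infty])/\Q_p$ is infinite and wildly ramified, its residue field extension stays bounded (it is contained in $\mathbb{F}_{p^2}$). Ramification in $\Q_p(E[\ell^\infty])/\Q_p$ for $\ell \neq p$ is controlled by Serre's open image theorem in the non-CM case, and by class field theory over the CM field in the CM case. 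Together, this shows that for every place $w$ of $\Q(E_\tors)$ above $p$, the residue field extension $k_w/\mathbb{F}_p$ has degree bounded by a constant $f$ depending only on $E$ and $p$.

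For the multiplicative assertion, I would adapt the Bombieri--Zannier proof of property $(B)$ for $\Q^{tp}$. Their argument uses that at places $w$ above $p$ in $\Q^{tp}$ one has $\log |\alpha|_w \in (\log p)\Z$, and then exploits the product formula. In the present setting one instead has $\log |\alpha|_w \in (\log p)f^{-1}\Z$ with $f$ uniformly bounded; the same computation therefore yields $h(\alpha) \geq c(E,p) > 0$ for all $\alpha \in \Q(E_\tors)^* \setminus \mu_\infty$.

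For the abelian assertion I would follow the template of Baker--Petsche \cite[Theorem 6.6]{BakerPetsche}, decomposing $\hat h_E$ into local canonical heights $\hat h_{E,v}$. At a supersingular prime the formal group of $E$ has height $2$, which provides a uniform positive lower bound on $\hat h_{E,p}(P)$ for non-torsion $P$ in a bounded extension of $\Q_p^{\mathrm{unr}}$; together with the residue-field control of the first step, and the fact that local heights at the remaining places are non-negative (archimedean contributions being handled by the standard positivity of the local height outside a small neighborhood of the identity and a separate treatment of small points via a Galois orbit / equidistribution argument), this delivers property $(B)$ for $E(\Q(E_\tors))$.

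The main obstacle I anticipate is precisely the residue-field control: proving an \emph{effective} upper bound for $f$ that is uniform in the full tower $\Q(E[p^n])$, and not merely at each finite layer, requires a rather subtle refinement of Serre's open image theorem at supersingular primes, together with a careful analysis of how the image of Galois intersects the ramification filtration of $\Q_p(E[p^\infty])/\Q_p$. Everything else — the adaptation of Bombieri--Zannier and of Baker--Petsche — is essentially bookkeeping once that uniform bound is in hand.
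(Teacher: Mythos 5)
Your proposal hinges on the assertion that the residue field extensions $k_w/\mathbb{F}_p$ at places $w\mid p$ of $\Q(E_\tors)$ have bounded degree. This is false. You correctly observe that for the $p$-power torsion at a supersingular prime, the residue field of $\Q_p(E[p^\infty])$ stays inside $\mathbb{F}_{p^2}$ (the image of local Galois sits in the normalizer of a non-split Cartan). But $\Q(E_\tors)$ also contains $E[\ell^n]$ for every $\ell\neq p$, and by N\'eron--Ogg--Shafarevich these generate \emph{unramified} extensions of $\Q_p$ whose degree is governed by the order of the Frobenius $\pm p$ modulo $\ell^n$ (this is precisely \cite[Lemma 3.2]{Habegger}, quoted as Lemma~\ref{lmm3.5} in the paper). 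That order is unbounded as $\ell$ and $n$ vary, so the residue field at $w$ is all of $\overline{\mathbb{F}_p}$, not $\mathbb{F}_{p^2}$. Your second bullet ("ramification in $\Q_p(E[\ell^\infty])/\Q_p$ is controlled\dots") is true but irrelevant: there is no ramification there at all; what is unbounded is the \emph{inertia} degree, which your argument needs to bound and does not. Separately, the quantization $\log|\alpha|_w\in(\log p)\,f^{-1}\Z$ in the Bombieri--Zannier mechanism is governed by the ramification index $e_w$, not the residue degree $f_w$, and the ramification index of $\Q_p(E[p^n])/\Q_p$ at a supersingular prime is totally ramified of degree roughly $p^{2n}$, hence also unbounded. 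So the crucial quantity you want to be uniformly bounded is unbounded under both interpretations, and no refinement of Serre's open image theorem at the $p$-part can rescue this, since the failure comes from the prime-to-$p$ torsion.

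This is exactly why Habegger's proof (and the alternative given in this paper as Theorem~\ref{thm2}) cannot proceed by a bounded-local-degree argument. The actual route is: (i) pick $\sigma$ acting by multiplication by $2$ on $E[p^n]$ and pass to $\beta=(\sigma\alpha)/\alpha^4$, (ii) use an equidistribution theorem of Baker--Rumely on the Berkovich projective line (Theorem~\ref{thm4}) to force $(\psi\beta^{q})/\beta^{p^2}$ to eventually be a root of unity for all $\psi$ in a decomposition group at $p$, and (iii) perform a Kummerian descent down the tower $K(N)\supset K(N/p)$. The descent step requires the group-theoretic identity $\Gal(F(N)/F(N/p))=\langle\bigcup_{w\mid v}D(w\mid w\cap F(N/p))\rangle$; Habegger established it via Serre's open image theorem, and the present paper replaces that input with Lemma~\ref{lmm2}. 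For the elliptic statement, the paper averages local N\'eron heights over the Galois orbit and uses the supersingularity-driven lower bound $\lambda_v(\psi P-P)\ge\log p/(2(p^2-1))$ of Lemma~\ref{lmm8}, which is in the spirit of Baker--Petsche as you suggest, but again this only closes the argument because of the descent step, which your sketch has no substitute for.

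In short, the residue-field/ramification bound you flagged as "the main obstacle" is not merely hard to make effective; it is false, and the proof must be organized around equidistribution plus Kummerian descent rather than around bounded local data.
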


For any elliptic curve $E$ and any integer $N\in\N=\{1,2,\dots\}$, write $E[N]$ for the group of $N$-torsion points of $E$ and define $j_E$ to be the $j$-invariant of $E$. 

Set $\mathrm{Mat}_2(A)$ the ring of square matrices with $2$ lines whose coefficients lie in a ring $A$. 
The set of its invertible elements is denoted with $\mathrm{GL}_2(A)$.
Define $\mathrm{SL}_2(A)$ as the kernel of the determinant map $\mathrm{GL}_2(A) \to A^*$ (here, $A^*$ is the set of invertible elements in $A$). 

Given a number field $K$, a finite place $v$ of $K$ and an algebraic extension $L/K$, we say that $L$ has bounded local degree at $v$ if $d_v(L)=\sup_w [L_w : K_v]$ is finite, where $w$ ranges over all extensions of $v$ to $L$. 
In such a case, we denote by $e_w(L\vert K)$, resp. $f_w(L\vert K)$, the ramification index, resp. inertia degree, of the extension $w\vert v$.
Finally, we define $K^{tv}$ as the maximal totally $v$-adic field, that is, the set of $\alpha\in \overline{K}$ such that $v$ is totally split in $K(\alpha)$. 
It is Galois over $K$ and $d_v(K^{tv})=1$. 

Recently, Frey pointed out a quite remarkable fact: Conjecture \ref{conj0} may be true for some infinite extensions $L/K$. 

\begin{thm} [Frey, \cite{Freypubli}, Theorem 7.1] \label{thm1}
Let $E/\Q$ be a non-CM elliptic curve, and let $L/\Q$ be a Galois extension such that the exponent $\exp(L)$ of its Galois group is finite.
Then there exists a rational prime $p$ satisfying:
\begin{itemize} 
\item [(a)] $E$ has supersingular reduction at $p$ and $j_E \not\equiv 0, 1728 \; (p)$; 
\item [(b)] the natural representation $\Gal(\Q(E[p])/\Q) \to \mathrm{GL}_2(\Z/p\Z)$ is surjective; 
\item [(c)] $p \geq \max\{2\sup_q \{d_q(L)\}+2; \exp(L)\}$, where $q$ runs over all rational primes, 
\end{itemize}
and for such a $p$, we have, for all $\alpha\in L(E_\tors)^* \backslash \mu_\infty$, 
\[h(\alpha) \geq \frac{(\log p)^4}{p^{5p^4}}.\]
\end{thm}

\begin{rqu}
\rm{By a theorem of Checcoli \cite[Theorem 1]{Checcoli}, if $L/\Q$ is Galois, then the exponent of its Galois group is finite if and only if $\sup_q\{d_q(L)\}$ is finite, where $q$ ranges over all rational primes.
So item $(c)$ makes sense here.}
\end{rqu}
 
The main goal of this paper is to establish that Conjecture \ref{conj0} is true for some Galois extensions $L/K$ whose Galois group has infinite exponent.

\begin{thm} \label{thm2}
Let $E$ be an elliptic curve defined over a number field $K$, and let $L/K$ be a finite Galois extension.
If there is a finite place $v$ of $K$ satisfying
\begin{itemize} 
\item [(a)] $E$ has supersingular reduction at $v$ and $j_E\not\equiv 0, 1728 \; (v)$;
\item [(b)] the image of the natural representation $\Gal(L(E[p])/L) \to \mathrm{GL}_2(\Z/p\Z)$ contains $\mathrm{SL}_2(\Z/p\Z)$, where $p\Z=v \cap \Z$; 
\item [(c)] $p> \max\{3, 2 d_v(L)\}$;
\item [(d)] $e_v(K\vert \Q)=1$ and $f_v(K \vert \Q) \leq 2$,
\end{itemize}
then for all $\alpha\in LK^{tv}(E_\tors)^* \backslash \mu_\infty$, we have
\[h(\alpha) \geq \frac{1}{4^{p^2 d_v(L)}+1}\left(\frac{\log p}{d_v(L)(40\sqrt{2}+2)[K:\Q] p^{2p^2d_v(L)+2}}\right)^{ 2+ \frac{4}{p^{p^2d_v(L)/4}-2}}.\]
Moreover, if $v$ is unramified in $L$ and if the natural representation in (b) is surjective, then $E(LK^{tv}(E_\tors))$ has the Bogomolov property. 
\end{thm}

\begin{rqu}
\rm{$a)$ Lemma \ref{lmm5.10} prevents us from providing an explicit lower bound of the N\'eron-Tate height for points lying in $E(LK^{tv}(E_\tors))$.

$b)$ 
Assume that $K=\Q(\sqrt{D})$ with $D\in\N$ and that $E/K$ has no CM.
Then $(a)$ is satisfied for infinitely many places by Elkies' thesis \cite{Elkies}. 
The natural representation in $(b)$ is surjective for all but finitely many rational primes by Serre's open image theorem \cite{Serrejp}. 
Item $(c)$ holds for all $p$ large enough since $d_v(L)\leq [L : K]$. 
Finally, all but finitely many finite places of $K$ are unramified in $L$ and satisfy $(d)$. 
So we can find a place $v$ of $K$ satisfying all conditions of Theorem \ref{thm2}. 
Thus $LK^{tv}(E_\tors)$ and $E(LK^{tv}(E_\tors))$ have the Bogomolov property. 
In particular, Conjecture \ref{conj0} is true for elliptic curves defined over a real quadratic field.

Nonetheless, Theorem \ref{thm2} does not permit us to treat the case $D<0$ in full generality.
For example, we do not know so far if the elliptic curve  \[E : iy^2 = x^3+(i-2)x^2+x\]
defined over $\Q(i)$ has at least one place of supersingular reduction (it is however conjectured that there exist infinitely many) \cite[Section 5.2]{Elkies}. 

$c)$ Our lower bound is much stronger than that of Theorem \ref{thm1}.
Let us see this through a concrete example.
Consider the elliptic curve
\[E : y^2+y=x^3-x^2-10x-20\]
defined over $K=\Q$.
According to \cite[elliptic curve 11.a2]{LMFDB}, $E$ has conductor $N=11$ and $j$-invariant $j_E=-2^{12} \; 11^{-5} \; 31^3$.
By the same reference, $(b)$ with $L=\Q$ holds for all $p \geq 7$.
Next, $j_E\not\equiv 0 \; (p)$ for all $p\notin \{2;11;31\}$ and $j_E\not\equiv 1728 \; (p)$, that is, 
\[ 2^6 \; 41^2 \; 61^2 = 11^5 \; 1728+2^{12} \; 31^3 \not\equiv 0 \; (p), \]
for all $p \notin \{2; 41; 61\}$. 
Finally, $E$ has supersingular reduction at $p=19$ \cite[Chapter 5, Example 4.6]{Silvermanarithmetic}. 
From all this, Theorem \ref{thm2} claims that for all $\alpha\in \Q^{t19}(E_\tors)^* \backslash \mu_\infty$, 
 \[h(\alpha) \geq  \frac{1}{4^{19^2}+1}\left(\frac{\log 19}{(40\sqrt{2}+2)19^{2 \cdot 19^2+2}}\right)^{2+ \frac{4}{19^{19^2/4}-2}} \geq 2.6 \cdot 10^{-2072}.\]
We cannot deduce this lower bound from Theorem \ref{thm1} because $\Gal(\Q^{t19}/\Q)$ has infinite exponent. 
Consider a number field $F\subset \Q^{t19}$ of degree $d\geq 9$.
Even under this restriction, it is not always possible to get the lower bound above from Theorem \ref{thm1} since $p=19$ is not a suitable choice here, item $(c)$ being not satisfied.
 
Let $n\in \N$ be an integer and write $\mathcal{V}(n)=\sum_{p \; \text{prime} \; \leq n} \log p$. 
We have the inequality $\mathcal{V}(n) < 1.01624n$ \cite{RosserSchoenfeld}.
Applying \cite[Theorem 4.13]{Frey} to $M=2d+2$ proves the existence of a rational prime $p$ between $n=\max\{2d+2, 7654\}$ and 
\[e^{1.3 \; 10^8 \; e^{2\mathcal{V}(n)+\frac{11}{15}e^{\mathcal{V}(n)}}} < e^{e^{e^{\mathcal{V}(n)}}} < e^{e^{e^{1,01624 n}}}\]
such that $E$ has supersingular reduction at $p$.
For such a choice of $p$, items $(a)-(c)$ of Theorem \ref{thm1} are all three satisfied, which leads to the lower bound 
\[\forall \alpha\in F(E_\tors)^* \backslash \mu_\infty, \; h(\alpha) \geq \left(e^{e^{e^{e^{1.02n}}}}\right)^{-1}.\]
We can compare the two lower bounds above and check that ours is much better. } 
\end{rqu}

Our theorem suggests that Conjecture \ref{conj0} can be extended as follows. 

\begin{conj} \label{conj1}
Let $A$ be an abelian variety defined over a number field $K$. 
Let $\mathcal{L}$ be a symmetric ample line bundle on $A/K$, and let $L/K$ be an algebraic extension. 
If $d_v(L)$ is finite for at least one finite place $v$ of $K$, then $L(A_\tors)$ and $A(L(A_\tors))$ have the Bogomolov property.  
\end{conj}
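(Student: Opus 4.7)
The plan is to follow the architecture of the proof of Theorem \ref{thm2}, treating the CM and non-CM cases separately. For the CM case one first decomposes $A$ up to $K$-isogeny into simple factors; each factor with CM is handled by the classical package (the toric part of $L(A_\tors)$ by the Amoroso--David--Zannier theorem quoted as Theorem \ref{thm5}, the abelian part by the Baker--Silverman argument), and the bounded local degree hypothesis plays no role there. The entire difficulty is concentrated in the non-CM factors, where one expects to need an elliptic-curve reduction and then imitate Theorem \ref{thm2}.

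For the non-CM case, the natural approach is to reproduce the Habegger-type local argument: produce, at some carefully chosen finite place $w$ of $K$, a lower bound on the local contribution to $\hat h_A$ (or to $h$ in the toric case) that is uniform over all $L_w$-points of $A$ that are not torsion. In Theorem \ref{thm2} this is made possible by two independent inputs, namely $d_v(K^{tv})=1$ and the existence of a place $v$ with $(P_0)$ and $(P_1)$; the former is provided by definition, the latter by Elkies' (or Elkies' thesis). Under Conjecture \ref{conj1} the bounded local degree is furnished only at one specific place $v$, which need not be a place of supersingular reduction, need not satisfy $(P_0)$, and need not even lie above a prime $p \geq 5$.

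The principal obstacle is therefore to secure a single place $w$ of $K$ that is simultaneously (a) a place at which $L$ has bounded local degree and (b) a place whose reduction and ramification data allow the supersingular/formal-group machinery underlying Theorems \ref{thm-1} and \ref{thm2} to operate. Two routes suggest themselves. One may attempt a Galois-theoretic transfer, trying to deduce from $d_v(L)<\infty$ at a single place $v$ that $d_w(L)$ remains bounded at some auxiliary place $w$ chosen by Elkies in the non-CM elliptic case; this seems delicate because $L/K$ is not assumed Galois and the given bounded-degree condition lives at a single prime, giving very little room. Alternatively, and in my view more promising, one should try to replace the strict supersingularity input by a softer condition (e.g.\ a rigidity statement for the formal group at $v$ valid at any prime of good reduction, combined with exploitation of the bounded local degree) so that the argument can be run \emph{at} the given $v$ itself; this would require a genuinely new local height estimate in the spirit of Baker--Petsche, but freed from the restriction on the reduction type.

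Once such a local estimate at some $w$ is available, passing from $L$ to $L(A_\tors)$ (the torsion direction) and from the toric to the full abelian statement should proceed by a descent/equidistribution argument of the type developed in Sections \ref{section 4} and \ref{section 5} of the present paper, applied with $K^{tv}$ replaced by the larger field $L$: the only property of $K^{tv}$ used there should be the boundedness of the local degree at $v$, so the same arguments would go through verbatim. The entire difficulty of Conjecture \ref{conj1} is thus concentrated in the local input at $w$, i.e.\ in relaxing $(P_0)$--$(P_1)$ enough to be compatible with an arbitrary algebraic extension of finite local degree at some place.
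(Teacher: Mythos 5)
This statement is Conjecture \ref{conj1}, not a theorem: the paper does not prove it, and indeed explicitly proposes it as an open problem motivated by Theorems \ref{thm0}, \ref{thm2} and \ref{thm5}. There is therefore no ``paper's own proof'' to compare against, and nothing you wrote should be read as closing the gap. To the paper's credit and yours, your text is honest on this point --- it is a survey of obstacles rather than a proof --- and it correctly isolates the central difficulty: the bounded-local-degree hypothesis is given at a single, arbitrary place $v$, while the supersingular/formal-group machinery of Habegger and of Theorem \ref{thm2} needs a well-chosen place $w$ satisfying $(P_0)$--$(P_1)$, and there is no mechanism to transfer control from $v$ to such a $w$ for a general algebraic $L/K$.

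A few inaccuracies in the CM side of your discussion are worth flagging. You assert that ``the bounded local degree hypothesis plays no role'' in the CM case; in fact $d_v(L)<\infty$ is precisely the hypothesis of Theorem \ref{thm5}, so it certainly does play a role there. Moreover Theorem \ref{thm5} requires $L/K$ to be Galois, whereas Conjecture \ref{conj1} allows an arbitrary algebraic extension, so even the CM toric case is not immediate from \ref{thm5} without an additional Galois-closure step (and one would need to check that $d_v$ stays finite after Galois closure, which is not automatic for non-Galois $L/K$). Finally, Theorem \ref{thm5} and the Baker--Silverman/Carrizosa results only deliver the toric statement $L(A_\tors)$ in the ADZ-style generality; the abelian statement $A(L(A_\tors))$ for an infinite $L$ of bounded local degree is already open in the CM case and is part of what Conjecture \ref{conj1} is asking for. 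So the CM case is not simply ``the classical package''; it, too, contains genuinely open content.
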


The best argument in favor of this statement is probably the result below. 

\begin{thm} [Amoroso-David-Zannier] \label{thm5}
Let $A$ be a CM abelian variety defined over a number field $K$. 
Let $\mathcal{L}$ be a symmetric ample line bundle on $A/K$, and let $L/K$ be a Galois extension. 
If $d_v(L)$ is finite for at least one finite place $v$ of $K$, then $L(A_\tors)$ has the Bogomolov property. 
\end{thm}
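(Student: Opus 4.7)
I would combine the main theorem of complex multiplication with a Lubin--Tate style local estimate for the N\'eron--Tate height at a place of good ordinary reduction of $A$, exploiting the controlled local complexity supplied by the hypothesis $d_v(L)<\infty$.

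\textbf{Reduction to $A(L^{\mathrm{ab}})$.} After enlarging $K$ to contain both the CM field $F=\mathrm{End}(A)\otimes\Q$ and the reflex field of its CM type---a finite extension that preserves the Galois-ness of $L/K$ and only multiplies $d_v(L)$ by a bounded factor---the main theorem of complex multiplication yields an abelian image for the Galois representation on the Tate modules of $A$, so that $K(A_\tors)\subseteq K^{\mathrm{ab}}$. Since $L/K$ is Galois and $K^{\mathrm{ab}}/K$ is abelian, $\Gal((L\cdot K^{\mathrm{ab}})/L)$ embeds via restriction into the abelian group $\Gal(K^{\mathrm{ab}}/K)$ and hence is abelian; consequently $L\cdot K^{\mathrm{ab}}\subseteq L^{\mathrm{ab}}$, and therefore $L(A_\tors)\subseteq L^{\mathrm{ab}}$. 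It thus suffices to prove the Bogomolov property of $\hat h_{\mathcal{L}}$ on $A(L^{\mathrm{ab}})$.

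\textbf{Local structure and lower bound.} Let $w$ be a place of $L^{\mathrm{ab}}$ above $v$. Its decomposition group in $\Gal(L^{\mathrm{ab}}/L)$ is a subgroup of an abelian group, so $L^{\mathrm{ab}}_w/L_{w|_L}$ is an abelian extension of local fields, and $[L_{w|_L}:K_v]\leq d_v(L)<\infty$. By Shimura's theorem, $A$ has a positive density of places of good ordinary reduction. Using that $L/K$ is Galois together with Chebotarev, I would produce such a place $\mathfrak p$ at which a chosen place $w'$ of $L^{\mathrm{ab}}$ above $\mathfrak p$ enjoys the same ``abelian extension of a bounded-degree extension of $K_\mathfrak p$'' structure. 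At $\mathfrak p$, the formal group $\hat A/\mathcal O_{K_\mathfrak p}$ is a formal $\mathcal O_{F_\mathfrak p}$-module of Lubin--Tate type. Assuming, for contradiction, a sequence of non-torsion $P_n\in A(L^{\mathrm{ab}})$ with $\hat h_{\mathcal{L}}(P_n)\to 0$, one multiplies each $P_n$ by a bounded integer to push it into the formal group at $w'$; the local N\'eron height $\hat\lambda_{w'}(P_n)$ then agrees, up to bounded terms, with the $\mathfrak p$-adic logarithm of the formal parameter of $P_n$. Lubin--Tate theory for formal $\mathcal O_{F_\mathfrak p}$-modules in an abelian extension of a bounded-degree local field yields an absolute positive lower bound for this parameter at non-torsion points, contradicting $\hat h_{\mathcal{L}}(P_n)\to 0$.

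\textbf{Main obstacle.} The crucial step is the transfer of the bounded-local-degree hypothesis from the given $v$ to a place $\mathfrak p$ of good ordinary reduction. One must use the Galois structure of $L/K$ (so that the local behaviour is governed, via Chebotarev, by Frobenius conjugacy classes in $\Gal(L/K)$) to produce a good ordinary prime whose decomposition subgroup in $\Gal(L/K)$ shares enough finiteness features with that of $v$. Once this matching of places is secured, the remaining formal-group estimate is a CM-theoretic analogue, in the non-abelian local setting allowed by $d_v(L)<\infty$, of the Baker--Silverman argument for CM elliptic curves.
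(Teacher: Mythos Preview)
You have misread the statement. In this paper, ``$L(A_\tors)$ has the property $(B)$'' refers to the \emph{Weil height} on the field $L(A_\tors)$: one must bound $h(\alpha)$ from below for $\alpha\in L(A_\tors)^*\setminus\mu_\infty$. Your entire proposal instead aims to bound the N\'eron--Tate height $\hat h_{\mathcal L}$ on $A(L^{\mathrm{ab}})$, i.e.\ you are attempting to show that $A(L(A_\tors))$ has property $(B)$. That is a different (and in general harder) statement, and nothing in your local formal-group argument touches the Weil height of an arbitrary algebraic number in $L(A_\tors)$.

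Even for the statement you are trying to prove, the step you yourself flag as the ``main obstacle'' is a genuine gap, not a technicality. The hypothesis gives $d_v(L)<\infty$ at \emph{one} specified place $v$, and nothing more. There is no Chebotarev mechanism that transports this to a place $\mathfrak p$ of good ordinary reduction: for the prototypical example $L=K^{tv}$ one has $d_v(L)=1$ while $d_w(L)=\infty$ for every other finite place $w$, so no good ordinary $\mathfrak p\neq v$ will have the bounded-local-degree structure your Lubin--Tate estimate needs. Frobenius conjugacy classes control behaviour in finite Galois quotients, not the size of decomposition groups in an infinite extension.

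The paper's proof is short and purely group-theoretic. Since $A$ is CM, there is a finite Galois extension $M/K$ with $M(A_\tors)/M$ abelian. One checks directly that $\Gal(LM(A_\tors)/LM)$ lies in the center of $\Gal(LM(A_\tors)/M)$: elements of the former commute with any $\tau$ on $M(A_\tors)$ because that extension is abelian, and they commute with $\tau$ on $LM$ because $LM/M$ is Galois and they fix $LM$. Since $d_v(LM)\le d_v(L)d_v(M)<\infty$, one is exactly in the situation of \cite[Theorem~1.2]{AmorosoDavidZannier}, which gives property $(B)$ for $LM(A_\tors)\supset L(A_\tors)$ directly. No local height analysis, no change of place, and no reduction-type hypothesis on $A$ is needed.
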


\begin{proof}
As $A$ is CM, there exits a finite Galois extension $M/K$ such that $M(A_\tors)/M$ is abelian. 
Choose $\sigma\in \Gal(LM(A_\tors)/LM)$ and $\tau\in \Gal(LM(A_\tors)/M)$.
If $\alpha\in M(A_\tors)$, then $\sigma\tau\alpha = \tau\sigma \alpha$ since $M(A_\tors)/M$ is abelian. 
If $\alpha\in LM$, then $\sigma(\tau \alpha) = \tau(\sigma\alpha) = \tau\alpha$ since $LM/M$ is Galois (because $L/K$ is by assumption) and $\sigma$ fixes the elements of $LM$. 
From all this, we get $\sigma\tau=\tau\sigma$, i.e., $\Gal(LM(A_\tors)/LM)$ is contained in the center of $\Gal(LM(A_\tors)/M)$. 
As $d_v(LM)$ is bounded from above by $d_v(L)d_v(M)< +\infty$, the theorem now arises from \cite[Theorem 1.2]{AmorosoDavidZannier}.
\end{proof}

\subsection*{Acknowledgement} 
I thank P. Habegger and L. Pottmeyer for replying to my questions as well as F. Amoroso and L. Terracini for pointing out a mistake in an earlier version of this text. 
This work was funded by Morningside Center of Mathematics, CAS. 

\section{An elementary result}
Write $\langle X\rangle$ for the group generated by a subset $X$ of a group $G$. 
Let $L/K$ be a Galois extension of number fields, and let $w$ be a finite place of $L$.
Set $D(w\vert w\cap K)$ the decomposition group of the extension $w\vert w\cap K$, that is, the set of $\psi\in\Gal(L/K)$ such that $\psi w = w$. 

Fix for this section a number field $K$ as well as a finite place $v$ of $K$. 
For any finite extension $L/K$, we write $V_L$ for the set of places of $L$ above $v$.

\begin{lmm} \label{lmm3.1}
Consider a totally $v$-adic finite Galois extension $M/K$ and a tower of number fields $K\subset K'\subset L$ with $L/K'$ Galois.
Assume that \[H':=\left\langle \bigcup_{w\in V_L} D(w\vert w\cap K')\right\rangle=\Gal(L/K').\] 
Then \[H:=\left\langle \bigcup_{w\in V_{LM}} D(w\vert w\cap K'M)\right\rangle=\Gal(LM/K'M).\] 
\end{lmm}   

\begin{proof}
Let $w\in V_{LM}$, and let $\mathrm{Res} : \Gal(LM/K'M) \to \Gal(L/K')$ be the restriction map. 
It is injective and induces a homomorphism from $\Gal((LM)_w/ (K'M)_w)$ to $\Gal(L_w/K'_w)$, and so from $D(w\vert w\cap K'M)$ to $D(w\cap L\vert w\cap K')$. 
As $M$ is a totally $v$-adic field, we have $M_w=K_v$; whence $\Gal((LM)_w/(K'M)_w)=\Gal(L_w/K'_w)$.
In particular, $D(w\vert w\cap K'M)$ and $D(w\cap L\vert w\cap K')$ have the same cardinality, and so $\mathrm{Res} : D(w\vert w\cap K'M) \to D(w\cap L\vert w\cap K')$ is an isomorphism for all $w\in V_{LM}$. 
Hence, $\mathrm{Res} : H \to H'$ is an isomorphism too. 
By assumption, we have the chain of inclusions $\Gal(L/K')=H'=\mathrm{Res} (H) \subset \mathrm{Res}(\Gal(LM/K'M))\subset \Gal(L/K')$ and the lemma follows. 
\end{proof}

Keep the notation of Lemma \ref{lmm3.1} and assume that both $K'/K$ and $L/K$ are Galois. 
Let $w$ be a finite place of $L$. 
Then $\psi D(w\vert w \cap K') \psi^{-1} = D(\psi w\vert \psi w \cap K')$ for all $\psi\in\Gal(L/K)$.
The fact that $\Gal(L/K)$ acts transitively on $V_{L}$ leads to 
\begin{equation} \label{eq0}
\langle \psi D(w\vert w \cap K') \psi^{-1}, \psi\in\Gal(L/K)\rangle = \left\langle \bigcup_{w'\in V_{L}} D(w'\vert w'\cap K')\right\rangle.
\end{equation}

\begin{cor} \label{cor1} 
Consider a totally $v$-adic finite Galois extension $M/K$ and a tower of number fields $K\subset K'\subset L$ with $K'/K$ and $L/K$ Galois.
Let $w$ be a place of $LM$ above $v$ and assume that \[\Gal(L/K')=\langle \psi D(w\cap L \vert w\cap K')\psi^{-1}, \psi\in \Gal(L/K)\rangle.\] 
 If $\gamma\in LM$ with $\sigma \gamma\in K'_w$ for all $\sigma\in \Gal(LM/K)$, then $\gamma\in K'M$. 
\end{cor}

\begin{proof}
By assumption, it arises from \eqref{eq0} that \[\Gal(L/K')= \left\langle \bigcup_{w'\in V_{L}} D(w'\vert w'\cap K')\right\rangle.\] 
Using Lemma \ref{lmm3.1}, then \eqref{eq0} applied to $L=LM$ and $K'=K'M$, gives
\begin{align*}
\Gal(LM/K'M) & = \left\langle \bigcup_{w'\in V_{LM}} D(w'\vert w'\cap K'M)\right\rangle \\ 
& = \langle \psi D(w\vert w\cap K'M)\psi^{-1}, \psi \in\Gal(LM/K) \rangle.
\end{align*}
We have $M_w=K_v$ since $M$ is a totally $v$-adic field. 
Thus $D(w\vert w\cap K'M)$ is equal to $\Gal((LM)_w/(K'M)_w)=\Gal(L_w/K'_w)$. 
The lemma follows since $\gamma$ is fixed by $\psi D(w\vert w\cap K'M)\psi^{-1}$ for all $\psi \in\Gal(LM/K)$. 
\end{proof}

\section{Some results extracted from \cite{Freypubli}. } \label{Section 2}
For any number field $K$ and any finite place $v$ of $K$, we denote by $K_v$ the completion of $K$ with respect to $\vert .\vert_v$, the normalized $v$-adic absolute value, that is, $\vert p\vert_v=p^{-1}$, where $p\Z= v \cap \Z$. 
Then, write $K_v^{ur}$ for the maximal unramified extension of $K_v$ and $\Q_{p^2}$ for the unramified extension of degree $2$ of $\Q_p$ inside $\overline{\Q_p}$.

In \cite[Section 3]{Freypubli}, Frey fixed the following notation: a non-CM elliptic curve $E/\Q$, a Galois extension $L/\Q$ whose Galois group has finite exponent, a rational prime $p$ satisfying the conditions $(a)-(c)$ of Theorem \ref{thm1}, a number field $K\subset L$, which is Galois over $\Q$, and a finite Galois extension $F/\Q_{p^2}$ containing $K_v$, where $v$ denotes the place of $K$ associated to a fixed field embedding $\overline{\Q}\to\overline{\Q_p}$. 

Actually, we can prove most of results mentioned in \cite{Freypubli} without involving most of conditions above.
Strictly speaking, we should reprove them all using only the minimal conditions. 
But they are very technical, making it impossible without considerably burden this text. 
As a compromise, we mention below the hypotheses/references that Frey used to prove each one of her results, then we detail one by one the requested conditions to use these references.  

\subsection{Results extracted from \cite[Section 3]{Freypubli}}
Here, $p$ denotes a rational prime.
In \cite[Lemma 3.1]{Freypubli}, she used $[F:\Q_p]<p, \Q_{p^2} \subset F$ and \cite[Lemma 3.4]{Habegger}. 
In \cite[Lemma 3.2]{Freypubli}, she used $[F:\Q_p]<p, \Q_{p^2} \subset F$ and \cite[Lemma 3.3, Lemma 3.4]{Habegger}.
In \cite[Lemma 3.3]{Freypubli}, she used $[F:\Q_p]<p, \Q_{p^2} \subset F$ and \cite[Lemma 3.3]{Habegger}. 
In \cite[Lemma 3.4]{Freypubli}, she used the fact that $F/\Q_{p^2}$ is a finite Galois extension as well as \cite[Lemma 2.1, Lemma 3.3, Lemma 3.4]{Habegger} and \cite[Proposition II.7.12]{Neukirch}.
In \cite[Lemma 3.5]{Freypubli}, she used the fact that $F/\Q_{p^2}$ is a finite Galois extension as well as \cite[Lemma 2.1, Lemma 3.3]{Habegger} and \cite[Lemme IV.5, Proposition IV.12]{Corpslocaux}.
In \cite[Lemma 3.6]{Freypubli}, she used \cite[Lemma 3.5]{Habegger}, \cite[Proposition II.7.13]{Neukirch}, Goursat's lemma, $[F:\Q_p]<p$ and $\Q_{p^2} \subset F$.
In \cite[Lemma 3.7]{Freypubli}, she used \cite[Proposition II.7.12]{Neukirch}.
In \cite[Lemma 3.8]{Freypubli}, she used results of \cite{Freypubli}. 

Thus, to prove \cite[Lemma 3.1-Lemma 3.8]{Freypubli}, we only need to assume that $F/\Q_{p^2}$ is a finite Galois extension such that $[F:\Q_p]<p$ as well as the necessary conditions so that \cite[Lemma 2.1, Lemma 3.3-Lemma 3.5]{Habegger}, \cite[Proposition II.7.12-Proposition II.7.13]{Neukirch}, \cite[Lemme IV.5, Proposition IV.12]{Corpslocaux} and Goursat's lemma hold. 
Goursat's lemma is a general fact about group theory, \cite[Lemme IV.5, Proposition IV.12]{Corpslocaux} are general results about ramification groups and \cite[Proposition II.7.12- Proposition II.7.13]{Neukirch} are general facts about cyclotomic fields. 
Then \cite[Lemma 2.1]{Habegger} is a general lemma on local fields.  
Finally, the results lying in \cite[Section 3- 5]{Habegger} hold for every rational prime $p\geq 5$ and every elliptic curve $E$ defined over $\Q_{p^2}$ with supersingular reduction and whose $j$-invariant is neither $0$ nor $1728$ in the residual field of $\Q_{p^2}$.

In conclusion, all results of \cite[Section 3]{Freypubli} work in the following situation, that we will refer from now as $(S)$: 
\begin{itemize}
\item $p\geq 5$ is a rational prime; 
\item $E$ is an elliptic curve defined over $\Q_{p^2}$ with supersingular reduction and its $j$-invariant is neither $0$ nor $1728$ in the residual field of $\Q_{p^2}$;
\item $F/\Q_{p^2}$ is a finite Galois extension such that $[F:\Q_p] <p$.
\end{itemize}

Say $N\in\N$.
Denote by $\mu_N$ the set of all $N$-th roots of unity and by $\mathrm{Aut} E[N]$ the set of automorphisms of $E[N]$.
Let $L/K$ be a finite Galois extension of local fields, and let $\pi$ be the prime ideal of $L$. 
For $i\geq 0$, we define $G_i(L/K)$ as the $i$-th ramification group of $L/K$, that is, the set of $\psi\in \Gal(L/K)$ such that $\psi x - x \in \pi^{i+1}$ for all $x\in L$ with $\vert x\vert_\pi\leq 1$.
It is well known that $G_0(L/K)= \Gal(L/L\cap K^{ur})$. 

We can now state some results extracted from \cite[Section 3]{Freypubli}.

\begin{lmm} \label{lmm2.1}
Let $p,E$ and $F$ be as in $(S)$. 
Let $N\in\N$ be an integer with $p$-adic valuation $n$.
Then:
\begin{enumerate} [(i)]
\item The extension $F(E[p^n])/F(E[p])$ is totally ramified of degree $p^{2(n-1)}$; 
\item The extension $F(E[N])/F(E[p^n])$ is unramified; 
\item $\Gal(F(E[N])/F(E[N/p]))\simeq \Gal(F(E[p^n])/F(E[p^{n-1}]))\simeq (\Z/p\Z)^2$ if $n\geq 2$;
\item For $m\in\N$ coprime to $p$, the image of the representation $\Gal(F(E[p^n])/F)\to \mathrm{Aut} E[p^n]$ contains the multiplication-by-$m^{[F: \Q_{p^2}]}$ map.
\item For $M\in\N$ coprime to $p$, the order of $\Gal(F(E[pM])/F(E[M]))$ divides $p^2-1$;
\item $\Gal(F(E[N])/F(E[N/p]))\subset G_s(F(E[N])/F)$ if $n\geq 2$, where $s=p^{2(n-1)}-1$.
\item If $n\geq 2$, then $F(E[N]) \cap \bigcup_{m\in\N} \mu_{p^m} = \mu_{p^n}$. 
\item Say $n\geq 2$. 
If $\psi\in\Gal(F(E[N])/F(E[N/p]))$ and if $a\in F(E[N])^*$ satisfy $(\psi a/a)^{p^2}\neq 1$, then $\psi a/a\notin \mu_\infty$ (in particular, $(\psi a/a)^{p^2}\notin \mu_\infty$).
\end{enumerate}
\end{lmm}

\begin{proof}
See \cite[Lemma 3.3-3.6, Lemma 3.8]{Freypubli}. 
\end{proof}

\subsection{Some results extracted from \cite[Section 4]{Freypubli}}
In \cite[Lemma 4.1 (i),(ii), (iv),(v)]{Freypubli}, Frey only used \cite[Lemma 2.1, Lemma 3.1, Lemma 5.1]{Habegger}.
In \cite[Lemma 4.2-4.5]{Freypubli}, she only used results of \cite[Section 4]{Freypubli} which are not \cite[Lemma 4.1 (iii)]{Freypubli}. 
All these statements therefore hold in the situation $(S)$. 

\begin{lmm} \label{lmm2.2}
Let $p,E$ and $F$ be as in $(S)$ and put $\mathcal{E}=(p^2-1)[F: \Q_{p^2}]$.  
Take an integer $N\in\N$ not divisible by $p^2$ and denote by $n$ its $p$-adic valuation.
Then there is $\phi\in\Gal(F(E[N])/F(E[p^n]))$ such that: 
\begin{enumerate}
\item [(i)] $\phi$ acts on $E[N/p^n]$ as multiplication by $p^\mathcal{E}$; 
\item [(ii)] For all $a\in F(E[N])$, we have $\vert \phi a-a^{p^{2\mathcal{E}}}\vert_p \leq p^{-1/\mathcal{E}}\max\{1, \vert a\vert_p\}^{1+p^{2\mathcal{E}}}$. 
\end{enumerate}
\end{lmm}
 
\begin{proof}
 $(i)$ Let $\tilde{\phi}\in\Gal(\Q_p^{ur}/\Q_{p^2})$ be the lift of the Frobenius squared.
Write $N/p^n= \prod_l l^{v_l}$ for the decomposition of $N/p^n$ into a product of rational primes.
Let $l$ be a rational prime dividing $N/p^n$. 
Then $l\neq p$ and \cite[Lemma 3.2]{Habegger} implies that $\tilde{\phi}$ acts on $E[l^{v_l}]$ as multiplication by $\pm p$. 
The isomorphism  $\bigoplus_l E[l^{v_l}] \simeq E[N/p^n]$ being compatible with the action of the Galois group, we deduce that $\tilde{\phi}$ acts on $E[N/p^n]$ as multiplication by $\pm p$.
By \cite[Lemma 4.1 (ii)]{Freypubli}, there is $\phi\in\Gal(F(E[N])/F(E[p^n]))$ such that $\phi$ and $\tilde{\phi}^\mathcal{E}$ coincide on $E[N/p^n]$. 
This shows $(i)$ since $\mathcal{E}$ is even.

$(ii)$ It arises from \cite[Lemma 4.4]{Freypubli} and from the equality $\vert \phi(a)\vert_p=\vert a\vert_p$, which holds since two Galois conjugates of $\overline{\Q_p}$ have the same $p$-adic absolute value \cite[Chapter II, §2, Corollaire 3]{Corpslocaux}.
\end{proof}

A proof of the next lemma can be found in \cite[Lemma 3.5]{FreyActa}.
It is only based on elementary calculations.

\begin{lmm} \label{lmm2.3}
Let $0<\delta<1/2$, and let $\beta\in\overline{\Q}^*\backslash \mu_\infty$ be such that $[\Q(\beta) : \Q]\geq 16$ and $h(\beta)\leq 1/4$. 
Then \[ \frac{1}{[\Q(\beta):\Q]}\sum_{\tau: \Q(\beta)\hookrightarrow \C} \log\vert \tau\beta -1\vert \leq \frac{40}{\delta^4}h(\beta)^{\frac{1}{2}-\delta}. \]
\end{lmm}

\subsection{Some results extracted from \cite[Section 5-6]{Freypubli}}
In \cite[Lemma 5.1-5.2]{Freypubli}, Frey only used results proved in \cite[Section 3]{Freypubli}. 
So we can use \cite[Lemma 5.2]{Freypubli} under the conditions more general of the situation $(S)$, which gives:
\begin{lmm} \label{lmm2.4}
Let $p,E$ and $F$ be as in $(S)$, and let $N\in\N$ be an integer divisible by $p^2$. Then for all $a\in F(E[N])$ and all $\psi\in \Gal(F(E[N])/F(E[N/p]))$, we have \[ \vert \psi a^{p^2}-a^{p^2}\vert_p \leq p^{-1/[F:\Q_{p^2}]} \max\{1, \vert a\vert_p\}^{2p^2}.\]  
\end{lmm}
(Again, we exploited the fact that $\vert \psi a\vert_p=\vert a\vert_p$). 

The cardinality of a finite set $X$ is denoted with $\# X$.
Apparently, \cite[Lemma 5.3]{Freypubli} seems to involve the conditions $(a)-(c)$ of Theorem \ref{thm1}.
Actually, they are useless and we prove a more general fact below. 

\begin{lmm} \label{lmm2.5}
Let $L/K$ be a Galois extension of number fields, and let $H$ be a normal subgroup of $\Gal(L/K)$. 
Let $\psi\in H$, and set \[C = \{\sigma\in\Gal(L/K), \sigma\psi\sigma^{-1}=\psi\} \] the centralizer of $\psi$. 
Then for all finite places $w$ of $L$, the cardinality of the orbit $C w =\{\sigma w, \sigma\in C\}$ is at least  $[L:K]/([L_w:K_w]\#H)$. 
\end{lmm}

\begin{proof}
The orbit of $\psi$ under the conjugation action of $\Gal(L/K)$ on itself is included in $H$ since this latter is normal in $\Gal(L/K)$. 
The orbit-stabilizer theorem ensures us that $\#C\geq [L:K]/\#H$.  
Let $w$ be a finite place of $L$. 
The Galois group $\Gal(L/K)$ acts transitively on all places of $L$ above $w\cap K$ and the total number of such places is $[L:K]/[L_w:K_w]$.
So the orbit $C w$ has cardinality at least \[\frac{1}{[\Gal(L/K): C]}\frac{[L:K]}{[L_w:K_w]}\geq \frac{[L:K]}{[L_w:K_w]\#H},\] which concludes the proof of the lemma.
\end{proof}

The proof of \cite[Lemma 6.1]{Freypubli} only requires results present in \cite[Section 3]{Freypubli}. 

\begin{lmm} \label{lmm2.6}
Let $p,E$ and $F$ be as in $(S)$.
Let $N\in\N$ be an integer whose $p$-adic valuation $n$ is at least $2$. 
Take an integer $m\in\N$ coprime to $p$.
Then there is $\tau_m\in \Gal(F(E[N])/F)$ such that 
\begin{enumerate}
\item [(i)] $\tau_m$ acts as raising to the power of $m^{2[F : \Q_{p^2}] (p^2-1)}$ on $\mu_{p^n}$; 
\item [(ii)] $\tau_m$ acts as multiplication by $m^{[F:\Q_{p^2}](p^2-1)}$ on $E[p^n]$;
\item [(iii)] $\tau_m$ acts trivially on $E[N/p^n]$. 
\end{enumerate}
\end{lmm}

\begin{proof}
For $(i)$ and $(ii)$, see \cite[Lemma 6.1]{Freypubli} (Frey gave the proof for $m=2$, but it easily extends to $m$ coprime to $p$ thanks to Lemma \ref{lmm2.1} $(iv)$). 
For $(iii)$, see the last paragraph in the proof of \cite[Lemma 6.1]{Freypubli}.  
\end{proof}

The next statement is a general lemma in linear algebra.

\begin{lmm} \label{lmm2.7}
Consider an odd rational prime $p$.
Let $U$ be a $\Z/p\Z$-vector subspace of $\mathrm{Mat}_2(\Z/p\Z)$ with order greater than $p$ that contains at least one non-zero scalar matrix. 
Then $\langle AUA^{-1}, A\in \mathrm{SL}_2(\Z/p\Z)\rangle = \mathrm{Mat}_2(\Z/p\Z)$. 
\end{lmm}

\begin{proof}
See \cite[Lemma 6.4]{Freypubli}. 
\end{proof}

For the convenience of the reader, we give a (quick) proof of the last lemma of this section although it is only a "copy-paste" of that of \cite[Lemma 6.5 (i)]{Freypubli}.

\begin{lmm} \label{lmm2.8}
Let $p,E$ and $F$ be as in $(S)$, and let $L\subset F$ be a number field. 
Assume that $E$ is defined over $L$ and that the image of the natural representation $\Gal(L(E[p])/L) \to \mathrm{GL}_2(\Z/p\Z)$ contains $\mathrm{SL}_2(\Z/p\Z)$. 
Take $N\in\N$ such that its $p$-adic valuation $n$ is at least $2$ and put $G=\Gal(F(E[N])/F(E[N/p]))$.
Then \[H:=\left\langle \psi G \psi^{-1}, \psi\in \Gal(L(E[N])/L)\right\rangle = \Gal(L(E[N])/L(E[N/p]))=:H'. \]
\end{lmm}

\begin{proof}
Let $\rho : \Gal(L(E[N])/L) \to \mathrm{GL}_2(\Z/N\Z)$ be the natural representation.
As $n\geq 2$, it is well known that we can define an injective homomorphism $\mathcal{L}$ from $H'$ to $\mathrm{Mat}_2(\Z/p\Z)$ as follows: If $\sigma\in H'$, then $\mathcal{L}(\sigma)$ is the unique element of $\mathrm{Mat}_2(\Z/p\Z)$ satisfying $\rho(\sigma)= 1+(N/p)\mathcal{L}(\sigma)$, where $1$ denotes the identity matrix. 

By definition, $H$ is the normal closure of $G$ in $\Gal(L(E[N])/L)$; whence $H\subset H'$.
Let $\pi : \mathrm{GL}_2(\Z/N\Z)\to \mathrm{GL}_2(\Z/p\Z)$ be the natural projection. 
If $\psi\in G\subset H\subset H'$ and if $\sigma\in\Gal(L(E[N])/L)$, then $\sigma\psi\sigma^{-1}\in H\subset H'$ and an easy calculation gives
\begin{align*}
\rho(\sigma \psi \sigma^{-1}) &= \rho(\sigma)\rho(\psi)\rho(\sigma)^{-1} = \rho(\sigma)(1+(N/p)\mathcal{L}(\psi))\rho(\sigma)^{-1} \\ 
& = 1+(N/p)\rho(\sigma)\mathcal{L}(\psi)\rho(\sigma)^{-1} = 1+(N/p)\pi\rho(\sigma)\mathcal{L}(\psi)\pi\rho(\sigma)^{-1},
\end{align*} 
leading to $\mathcal{L}(\sigma\psi\sigma^{-1})=\pi\rho(\sigma)\mathcal{L}(\psi)\pi\rho(\sigma)^{-1}\in \mathcal{L}(H)$.
By assumption, the image of $\pi\rho$ contains $\mathrm{SL}_2(\Z/p\Z)$. 
It implies that $\langle A\mathcal{L}(G)A^{-1}, A\in \mathrm{SL}_2(\Z/p\Z)\rangle \subset \mathcal{L}(H)$. 

Lemma \ref{lmm2.1} $(iii)$ tells us that $G$, and so $\mathcal{L}(G)$, has cardinality $p^2$.
If $\mathcal{L}(G)$ contains a non-zero scalar matrix, then Lemma \ref{lmm2.7} applied to $U=\mathcal{L}(G)$ would prove that $\mathrm{Mat}_2(\Z/p\Z)= \mathcal{L}(H)\subset \mathcal{L}(H')\subset \mathrm{Mat}_2(\Z/p\Z)$, and so $H=H'$ by injectivity of $\mathcal{L}$.

Let $m$ be a generator of $(\Z/p^n\Z)^*$. 
As $[F:\Q_p]<p$, and so is coprime to $p$, it follows that $M=m^{[F:\Q_{p^2}](p^2-1)p^{n-2}}$ has order $p$ in $(\Z/p^n\Z)^*$. 
Moreover, $M\equiv 1 \; (p^{n-1})$ by Euler's theorem. 
Consequently, the multiplication-by-$M$ map has order $p$ in $\mathrm{Aut} E[p^n]$ and acts trivially on $E[p^{n-1}]$. 

Lemma \ref{lmm2.6} $(ii)$ applied to $N=p^n$ and $m=m^{p^{n-2}}$ tells us that there exists $\tau\in \Gal(F(E[p^n])/F)$ acting on $E[p^n]$ as multiplication by $M$. 
By the foregoing, $\tau$ is an element of $\Gal(F(E[p^n])/F(E[p^{n-1}]))$ with order $p$. 

By Lemma \ref{lmm2.1} $(iii)$, the restriction map $G\to \Gal(F(E[p^n])/F(E[p^{n-1}]))$ is an isomorphism. 
Let $\tilde{\tau}\in G$ be the element that gets mapped to $\tau$ under this map. 
As $\tilde{\tau}$ acts on $E[p^n]$ as scalar multiplication, we deduce that $\mathcal{L}(\tilde{\tau})\in\mathcal{L}(G)$ is a scalar matrix, which cannot be zero since $\tilde{\tau}$ has order $p$ and $\mathcal{L}$ is injective. 
\end{proof}

\section{Proof of Theorem  \ref{thm1}: toric case}

Fix for this section the notation (and assumptions) of Theorem \ref{thm2} in the toric case and a field embedding $\overline{K} \to \overline{K_v}$.
As everything is now fixed, we ease notation by putting $M(N)=M(E[N])$ for any field $M\subset \overline{K_v}$ and any integer $N\in\N$.

Item $(d)$ leads to either $K_v=\Q_p$ or $K_v = \Q_{p^2}$. 
Our elliptic curve is therefore defined over $\Q_{p^2}$. 
Moreover, by $(a)$, it has supersingular reduction and its $j$-invariant is neither $0$ nor $1728$ in the residual field of $\Q_{p^2}$.
Then, put $F=L_{w_0}\Q_{p^2}$, where $w_0$ is the place of $L$ associated to the fixed embedding $\overline{K}\to\overline{K_v}$.
It is Galois over $\Q_{p^2}$ since $L/K$ is Galois. 
Next, it follows from $(c)$ that $p\geq 5$ and 
\begin{equation} \label{eq40}
p> 2 d_v(L) \geq [\Q_{p^2} : \Q_p] [L_{w_0}: K_v]\geq [F:\Q_p]. 
\end{equation}
To summarize, our scope is a particular case of the situation $(S)$. 
By $(b)$, $E$ is defined over $L\subset F$ and the natural representation $\Gal(L(p)/L)\to \mathrm{GL}_2(\Z/p\Z)$ contains $\mathrm{SL}_2(\Z/p\Z)$. 
We thus have access to all the results of Section \ref{Section 2}.

The next two results will serve us both in the toric case and in the elliptic case.
Start by putting in place our descent argument.

\begin{lmm} \label{lmm4.0}
Let $N\in\N$ be an integer divisible by $p(p^2-1)$ such that \[\left\langle \psi \Gal(F(N)/F(N/p)) \psi^{-1}, \psi\in \Gal(L(N)/L)\right\rangle = \Gal(L(N)/L(N/p)). \]
Let $M/K$ be a totally $v$-adic finite Galois extension. 
If $\gamma\in LM(N)$ with $\sigma \gamma\in F(N/p)$ for all $\sigma\in \Gal(LM(N)/K)$, then $\gamma\in LM(N/p)$. 
\end{lmm}

\begin{proof}
Since $N/p$ is divisible by $p^2-1$, basic properties of the Weil pairing prove that $\zeta_{p^2-1}\in K(N/p)$. 
As $\Q_{p^2}=\Q_p(\zeta_{p^2-1})$, we get $\Q_{p^2}\subset K_v(N/p)\subset K_v(N)$. 

Denote by $w$ the place of $LM(N)$ associated to the fixed embedding $\overline{K}\to \overline{K_v}$.
Then $L(N)_w= L_{w_0}\Q_{p^2}(N)=F(N)$. 
Similarly, $L(N/p)_w= F(N/p)$. 
In conclusion, $\Gal(F(N)/F(N/p))= D(w\vert w\cap L(N/p))$.
The lemma now arises from Corollary \ref{cor1} applied to $K'=L(N/p)$ and $L=L(N)$.
\end{proof}

\begin{lmm} \label{lmm4.2}
Take an integer $N\in \N$ of $p$-adic valuation $n$ and $\psi\in\Gal(F(N)/F)$.
If $\psi$ acts as scalar multiplication on both $E[p^n]$ and $E[N/p^n]$, then it belongs to the center of $\Gal(LK^{tv}(N)/K)$.
In particular, the elements $\phi$ and $\tau_m$ introduced in Lemma \ref{lmm2.2} and Lemma \ref{lmm2.6}, respectively, lie in the center of $\Gal(LK^{tv}(N)/K)$.
\end{lmm}

\begin{proof}
Clearly, $\psi$ fixes $LK^{tv}\subset F$.
Taking the sum of points gives an isomorphism between $E[p^n] \times E[N/p^n]$ and $E[N]$, which is compatible with the action of $\Gal(\overline{K}/K)$.
We infer that $\psi$ must lie in the center of $\Gal(LK^{tv}(N)/K)$. 
\end{proof}

A proof of the well-known result below can be found in \cite[Lemma 2 (i)]{Dobrowolski}. 

\begin{lmm} \label{lmm4.01}
Let $a\in\overline{\Q}^*$, and let $\psi\in\Gal(\overline{\Q}/\Q)$. 
If $a\notin\mu_\infty$, then $\psi a^b/a^c\notin\mu_\infty$ for all integers $b,c\in\N$ distinct.
\end{lmm}

Let $\alpha\in LK^{tv}(E_\tors)^*\backslash \mu_\infty$. 
There is a totally $v$-adic finite Galois extension $M/K$ such that $\alpha\in LM(E_\tors)$. 
For brevity, put $L'=LM$.

The proof of the proposition below is largely inspired by that of \cite[Lemma 4.6]{Freypubli}.

\begin{prop} \label{prop1}
Let $N\in\N$ be an integer with $p$-adic valuation $n$, and let $a\in L'(N)^*\backslash \mu_\infty$.
Assume that $n\leq 1$ or that $n\geq 2$ and $a^{p^2}\notin F(N/p)$.  
Then \[ h(a)\geq k= \left(\frac{\log p}{d_v(L)(40\sqrt{2}+2)[K:\Q] p^{2p^2d_v(L)+2}}\right)^{2+\frac{4}{p^{p^2d_v(L)/4}-2}}. \] 
\end{prop}

\begin{proof}
Construct $\psi\in \Gal(L'(N)/L')$ as follows: If $n\leq 1$, then $\psi$ is the homomorphism $\phi$ of Lemma \ref{lmm2.2}.
Otherwise, $\psi$ is any element of $\Gal(F(N)/F(N/p))$ satisfying $\psi a^{p^2} \neq a^{p^2}$ (such an element exists by assumption). 
Next, put \[ t = \begin{cases}
0 \; \text{if} \; n\leq 1 \\ 
4 \; \text{if} \; n\geq 2
\end{cases}, \; \mathcal{E}=\begin{cases}
(p^2-1)[F:\Q_{p^2}] \; \text{if} \; n\leq 1 \\ 
[F: \Q_{p^2}] \; \; \; \; \; \; \; \; \; \; \; \; \; \text{if} \; n\geq 2
\end{cases}, \;  (b,c) = \begin{cases}
(1, p^{2\mathcal{E}}) \; \text{if} \; n\leq 1 \\ 
(p^2,p^2) \; \text{if} \; n\geq 2
\end{cases} \] and $x=\psi a^b - a^c$.
Note that the latter is non-zero (for $n\geq 2$, it is by construction and for $n\leq 1$, it is by Lemma \ref{lmm4.01}).

Denote by $v_0$ the place of $L'(N)$ associated to the fixed embedding $\overline{K}\to \overline{K_v}$. 
Let $C$ be the centralizer of $\psi$ in $\Gal(L'(N)/K)$. 
Lemma \ref{lmm4.2} gives $C= \Gal(L'(N)/K)$ if $n\leq 1$, and so the orbit $C v_0$ is the set of all places of $L'(N)$ above $v$. 
In particular, it has cardinality $[L'(N) : K]/[L'(N)_{v_0} : K_v]$.  
If $n\geq 2$, then Lemma \ref{lmm2.5} applied to $L=L'(N)$ and $H=\Gal(L'(N)/L'(N/p))$, which has cardinality at most $p^4$, proves that $C v_0$ has cardinality at least $[L'(N) : K]/(p^4[L'(N)_{v_0} : K_v])$.
To summarize, $Cv_0$ has cardinality at least $[L'(N) : K]/(p^t[L'(N)_{v_0} : K_v])$.

Let $w$ be a place of $L'(N)$. 
If $w$ is a finite place, the ultrametric inequality gives 
\begin{equation} \label{eq11}
\vert x\vert_w\leq \max\{\vert \psi a ^b\vert_w, \vert a\vert_w^c\}\leq \max\{1, \vert \psi a\vert_w\}^b \max\{1, \vert a\vert_w\}^c.
\end{equation}
If we further assume that $w\in C v_0$, then there is $\sigma\in C$ such that $w=\sigma^{-1}v_0$. 
Thus, \[\vert x\vert_w = \vert x \vert_{\sigma^{-1} v_0} = \vert \sigma x\vert_{v_0}= \vert \sigma(\psi a)^b-\sigma a^c\vert_{v_0}= \vert \psi(\sigma a)^b - \sigma a^c\vert_{v_0}.\]
Lemma \ref{lmm2.2} (if $n\leq 1$) or Lemma \ref{lmm2.4} (if $n\geq 2$) applied to $a=\sigma a$ gives 
\begin{equation} \label{eq12}
 \vert x\vert_w\leq p^{-1/\mathcal{E}} \max\{1, \vert \sigma a\vert_{v_0}\}^{b+c}= p^{-1/\mathcal{E}} \max\{1, \vert a\vert_w\}^{b+c}.
\end{equation}
If $w$ is an infinite place, we have to take a little detour. 
Put $\beta= \psi a^b/a^c\neq 1$ and note that $h(\beta)\leq h(\psi a^b)+h(a^c)= (b+c)h(\alpha)\leq 2p^{2\mathcal{E}} h(a)$. 
Moreover, $\beta\notin \mu_\infty$ (it is clear by Lemma \ref{lmm2.1} $(viii)$ if $n\geq 2$ and by Lemma \ref{lmm4.01} otherwise).
Clearly,
\begin{equation} \label{eq13}
\vert x\vert_w = \vert \beta -1\vert_w \vert a\vert_w^c\leq \vert \beta-1\vert_w \max\{1, \vert a \vert_w^{b+c}\}.
\end{equation}
Recall that $x\neq 0$.
Collecting \eqref{eq11}-\eqref{eq13}, it follows from the product formula that 
\begin{equation} \label{eq1}
\begin{aligned}
0 & = \sum_w  [L'(N)_w : \Q_p]\log\vert x\vert_w \\ 
& \leq \sum_{w\in C v_0} [L'(N)_w : \Q_p] \log\left(p^{-1/\mathcal{E}}\max\{1,\vert a\vert_w\}^{b+c}\right) \\ 
& + \sum_{w\notin C v_0, w\nmid \infty} [L'(N)_w : \Q_w]\log\left( \max\{1, \vert \psi a\vert_w\}^b\max\{1, \vert a\vert_w\}^c\right) \\ 
& + \sum_{w\vert \infty} [L'(N)_w : \Q_w]\log\left(\vert \beta-1\vert_w \max\{1, \vert a \vert_w^{b+c}\}\right). 
\end{aligned}
\end{equation}
As $L'(N)/K$ is Galois, the degree of the extension $L'(N)_w/K_v$ does not depend on the place $w$ of $L'(N)$ above $v$. 
Thus \[\sum_{w\in C v_0} [L'(N)_w : \Q_p]=[K_v : \Q_p] [L'(N)_{w_0} : K_v] \#(C v_0) \geq \frac{[K_v : \Q_p] [L'(N): K]}{p^t}.\]
After dividing \eqref{eq1} by $[L'(N) : \Q]$, we infer, thanks to a small calculation, that 
\begin{equation} \label{eq2}
 \frac{[K_v : \Q_p] \log p}{\mathcal{E} [K: \Q] p^t} \leq (b+c)h(a) + \frac{1}{[L'(N): \Q]} \sum_{w\vert \infty} [L'(N)_w : \Q_w] \log \vert \beta-1\vert_w.
\end{equation}
If $h(\beta)\geq 1/4$, then the proposition follows from the inequality $h(\beta)\leq 2p^{2\mathcal{E}} h(a)$. 
If $[\Q(\beta) : \Q]\leq 15$, then Dobrowolski's inequality \cite{Dobrowolski} gives \[ h(\beta) \geq \frac{1}{15} \log\left(1+\frac{1}{1200}\left(\frac{\log\log 15}{\log 15}\right)^3\right) \geq 10^{-6}\] and the proposition arises from the inequality $h(\beta)\leq 2 p^{2\mathcal{E}} h(a)$. 
If $h(\beta)\leq 1/4$ and $[\Q(\beta) : \Q]\geq 16$, then Lemma \ref{lmm2.3} applied to $\delta=1/p^{\mathcal{E}/4}$ gives \[ \frac{1}{[L'(N): \Q]} \sum_{w\vert \infty} [L'(N)_w : \Q_w] \log \vert \beta-1\vert_w \leq 40p^{\mathcal{E}}h(\beta)^{(1/2)-\delta}\leq 40\sqrt{2}p^{2\mathcal{E}} h(a)^{(1/2)-\delta}.  \]
The proposition is trivial if $h(a)\geq 1$. 
Otherwise, $h(a)< 1$ and from \eqref{eq2}, we get \[\frac{\log p}{\mathcal{E}[K:\Q]p^t} \leq 2p^{2\mathcal{E}}h(a)+ 40\sqrt{2}p^{2\mathcal{E}} h(a)^{(1/2)-\delta} \leq (40\sqrt{2}+2) p^{2\mathcal{E}} h(a)^{1/2-\delta} \] since $b+c \leq 2p^\mathcal{E}$.
Recall that $[F:\Q_p]= 2 [ F : \Q_{p^2}] \leq 2d_v(L)$ by \eqref{eq40}. 
So we have $\mathcal{E}\leq p^2d_v(L)$ and $2\mathcal{E}+t\leq 2p^2d_v(L)$.
We finally get \[h(a)\geq \left(\frac{\log p}{d_v(L)(40\sqrt{2}+2)[K:\Q] p^{2p^2d_v(L)+2}}\right)^{\frac{2}{1-2\delta}}. \] 
The proposition follows since $2/(1-2\delta) = 2+4/(p^{\mathcal{E}/4}-2) > 2+4/(p^{p^2 d_v(L)/4}-2)$. 
\end{proof}

\textit{Proof of Theorem \ref{thm2}, toric case:} 
Let $N\in\N$ be an integer such that $\alpha\in L'(N)$.
By enlarging $N$ if needed, we can assume that it is divisible by $p^2(p^2-1)$.
Let $n\geq 2$ denote the $p$-adic valuation of $N$.
Recall that $[F : \Q_p]<2d_v(L)$ by \eqref{eq40}. 

Put $\tau=\tau_2\in \Gal(F(N)/F)$ the homomorphism introduced in Lemma \ref{lmm2.6} as well as $\gamma= (\tau\alpha)/\alpha^D\in L'(N)$, where $D=4^{[F:\Q_{p^2}](p^2-1)}\leq 4^{p^2 d_v(L)}$.
We get \[h(\gamma)\leq h(\tau \alpha)+h(\alpha^D)= (1+D)h(\alpha)\leq (1+4^{p^2 d_v(L)}) h(\alpha)\] and $\gamma\notin\mu_\infty$ by Lemma \ref{lmm4.01}.
Our theorem would follow if we get $h(\gamma) \geq k$ (see Proposition \ref{prop1}).
Let $n'\in\N$ be the least integer such that $\sigma \gamma\in F(p^{n'-n}N)$ for all $\sigma\in \Gal(L'(N)/K)$. 
We have $n'\leq n$ since $\gamma\in L'(N)$. 

Show by decreasing induction on $t$ that $\gamma\in L'(p^{t-n} N)$ for all $t\in\{n', \dots, n\}$. 
The base case $t=n$ is obvious. 
We now assume that our assertion is true for $t>n'\geq 1$ and show that it also holds for $t-1$. 
Recall that $p^n$ divides $N$.

Clearly, $p^2(p^2-1)$ divides $N_t=p^{t-n}N$. 
Lemma \ref{lmm2.8} applied to $N=N_t$ gives \[\left\langle \psi \Gal(F(N_t)/F(N_t/p)) \psi^{-1}, \psi\in \Gal(L(N_t)/L)\right\rangle = \Gal(L(N_t)/L(N_t/p)).\]
By assumption, $\sigma \gamma \in F(N_t/p)$ for all $\sigma\in \Gal(L'(N_t)/K)$ and Lemma \ref{lmm4.0} applied to $N=N_t$ ends the induction. 
In particular, $\gamma\in L'(N')$ where $N'=N_{n'}$.

\underline{Case $n'= 1$}. 
As $\gamma\in L'(N')$ is neither $0$ nor a root of unity, we can apply Proposition \ref{prop1} to $N=N'$ and $a=\gamma$, which gives us $h(\gamma)\geq k$.

\underline{Case $n'\geq 2$}.
The minimality of $n'$ proves that there is $\sigma\in \Gal(L'(N)/K)$ such that $\sigma \gamma\notin F(N'/p)$. 
We want to apply Proposition \ref{prop1} to $N=N'$ and $a=\sigma \gamma$, which would prove our theorem since $h(\gamma)=h(\sigma\gamma)$. 
As $\gamma\in L'(N')$, it remains to show that $\sigma\gamma^{p^2} \notin F(N'/p)$. 
For this, assume by contradiction that it is the case.

Since $\sigma\gamma \notin F(N'/p)$, there is $\psi\in\Gal(F(N)/F(N'/p))$ such that $\psi\sigma\gamma \neq \sigma\gamma$. 
Moreover, $\psi\sigma\gamma^{p^2}= \sigma\gamma^{p^2}$ by assumption. 
Thus $\psi\sigma\gamma=\zeta\sigma\gamma$ for some $\zeta\in\mu_{p^2}\backslash \{1\}$. 
As $\tau$ commutes with both $\psi$ and $\sigma$ by Lemma \ref{lmm4.2}, we get
\begin{align*}
\zeta & = \frac{\psi\sigma\gamma}{\sigma\gamma}  = \frac{\psi((\sigma\tau\alpha)/\sigma \alpha^D)}{(\sigma\tau\alpha)/\sigma\alpha^D} = \frac{\tau(\psi\sigma\alpha)}{\tau(\sigma\alpha)} \frac{(\sigma \alpha)^D}{(\psi\sigma \alpha)^D} = \frac{\tau\eta}{\eta^D},
\end{align*}
where $\eta=(\psi \sigma \alpha)/\sigma \alpha$. 
As $\zeta\in\mu_\infty$, we have $\eta\in\mu_\infty$ by the contrapositive of Lemma \ref{lmm4.01}.
Let $T\in\N$ be an integer coprime to $p$ such that $\eta^T$ has order a power of $p$. 
Lemma \ref{lmm2.1} $(vii)$ gives $\eta^T\in\mu_{p^n}$ and Lemma \ref{lmm2.6} proves that $\tau\eta^T=(\eta^T)^D$. 
We conclude $\zeta^{p^2}=\zeta^T=1$, and so $\zeta=1$ since $T$ and $p$ are coprime, a contradiction. \qed

\section{Proof of Theorem \ref{thm2}: elliptic case} \label{section 5}
We now fix the notation (and assumptions) of Theorem \ref{thm2} in the elliptic setting as well as a field embedding $\overline{K}\to\overline{K_v}$. 
Let $w_0$ be the place of $L$ associated to this embedding and put $F=L_{w_0}\Q_{p^2}$. 
Recall that $E,p$ and $F$ satisfy the conditions of the situation $(S)$ and that every result of \cite[Section 3-5]{Habegger} works in this setting.
For the convenience of the reader, we state \cite[Lemma 3.3 (iii), Lemma 3.4 (ii), (iv)]{Habegger}.

\begin{lmm} \label{lmm5.-1}
Let $N\in\N$ be an integer with $p$-adic valuation $n$. 
Then: 
\begin{enumerate}
\item [(i)] $\Gal(\Q_{p^2}(p^n)/\Q_{p^2})$ acts transitively on the torsion points of order $p^n$;  
\item [(ii)] The extension $\Q_{p^2}(N)/\Q_{p^2}(N/p^n)$ is totally ramified;
\item [(iii)] If $n=1$, then $\Gal(\Q_{p^2}(N)/\Q_{p^2}(N/p))$ is cyclic of order $p^2-1$.  
\end{enumerate}
\end{lmm}

Note that $F/\Q_{p^2}$ is unramified since $v$ is unramified in $L$ by assumption.
The proof of the next lemma becomes obvious thanks to Lemma \ref{lmm5.-1} $(ii)$.

\begin{lmm} \label{lmm5.0} 
Let $N\in\N$ be an integer with $p$-adic valuation $n$. 
Then $F(N)/F(N/p^n)$ is totally ramified and $\Gal(F(N)/F(N/p^n))\simeq \Gal(\Q_{p^2}(N)/\Q_{p^2}(N/p^n))$. 
\end{lmm}

We now state our argument descent. 

\begin{lmm} \label{lmm5.25}
Let $N\in\N$ be an integer divisible by $p(p^2-1)$ with $p$-adic valuation $n$, and let $M/K$ be a totally $v$-adic finite Galois extension. 
If $\gamma\in LM(N)$ with $\sigma \gamma\in F(N/p)$ for all $\sigma\in \Gal(LM(N)/K)$, then $\gamma\in LM(N/p)$. 
\end{lmm}
\begin{proof}
By Lemma \ref{lmm4.0}, it suffices to establish that \[H:=\left\langle \psi G \psi^{-1}, \psi\in \Gal(L(N)/L)\right\rangle = \Gal(L(N)/L(N/p)), \]
 where $G=\Gal(F(N)/F(N/p))$. 
This holds when $n\geq 2$ by Lemma \ref{lmm2.8}. 
So assume that $n=1$.  
The left-hand side is the normal closure of $G\subset \Gal(L(N)/L(N/p))$ in $\Gal(L(N)/L)$; it is therefore contained in the right-hand one.
Moreover, as $p$ does not divide $N/p$, we know that $\Gal(L(N)/L(N/p))$ can identify with a subgroup of $\mathrm{GL}_2(\Z/p\Z)$. 
To obtain what we wish, it suffices to get $\# H\geq \# \mathrm{GL}_2(\Z/p\Z)$.

Let $\rho : \Gal(L(N)/L) \to \mathrm{GL}_2(\Z/p\Z)$ be the composition of the two natural maps $\Gal(L(N)/L) \to \Gal(L(p)/L)$ and $\Gal(L(p)/L)\to \mathrm{GL}_2(\Z/p\Z)$.
It is surjective by assumption and Galois theory tells us that its kernel is $\Gal(L(N)/L(p))$.
Thus, \[\rho(H)= \langle \rho(\psi) \rho(G) \rho(\psi)^{-1}, \psi\in \Gal(L(N)/L)\rangle = \langle h\rho(G) h^{-1}, h\in\mathrm{GL}_2(\Z/p\Z)\rangle.\] 
Combining Lemma \ref{lmm5.-1} $(iii)$ with Lemma \ref{lmm5.0} shows that $G$ is a cyclic group of order $p^2-1$. 
As $G \cap \Gal(L(N)/L(p)) \subset \Gal(L(N)/L(N/p))\cap \Gal(L(N)/L(p))=\{1\}$, it follows that $\rho$ restricted to $G$ is injective. 
Hence, $\rho(G)$ is a cyclic group of order $p^2-1$.
This finishes the proof since \cite[Lemma 6.1]{Habegger} gives $\rho(H)= \mathrm{GL}_2(\Z/p\Z)$.
\end{proof}

The rest of the proof faithfully follows the lines of \cite[Section 8.2]{Habegger}. 

\begin{lmm} \label{lmm5.1}
Let $N\in\N$ be an integer with $p$-adic valuation $n\geq 1$. 
We have $E(F(N)) \cap \bigcup_{m\in\N} E[p^m] = E[p^n]$. 
\end{lmm}

\begin{proof}
The inclusion $\supset$ is obvious. 
Let $T\in E(F(N))$ be a torsion point of order $p^{n'}$ and obtain $n'\leq n$. 
By Lemma \ref{lmm5.-1} $(i)$, for each $T'\in E[p^{n'}]$, there is $\sigma$ in $\Gal(\Q_{p^2}(p^{n'})/\Q_{p^2})$ such that $T'=\sigma T$.
The field $F(N)$ being Galois over $\Q_{p^2}$, we get $T'\in E(F(N))$. 
Hence, $E[p^{n'}]\subset E(F(N))$ or, equivalently, $F(p^{n'})\subset F(N)$.

The lemma is obvious if $n'=0$.
So assume that $n'\geq 1$. 
By Lemma \ref{lmm2.1} $(i)$, the extension $F(p^{n'})/F(p)$ has ramification index $p^{2(n'-1)}$. 
Next, $F(N)/F(p^n)$ is unramified by Lemma \ref{lmm2.1} $(ii)$. 
Again, Lemma \ref{lmm2.1} $(i)$ shows that the ramification index of $F(N)/F(p)$ is $p^{2(n-1)}$. 
We conclude $n'\leq n$ since $F(p^{n'})\subset F(N)$. 
\end{proof}

As $E/\Q_{p^2}$ has good reduction, the Criterion of N\'eron-Ogg-Shafarevich asserts that $F(N)/F$ is unramified for all integers $N\in\N$ coprime to $p$. 

\begin{lmm} \label{lmm5.2}
Let $N=p^nM\in\N$ be an integer with $M$ coprime to $p$ and $n\geq 1$.  
Consider $\psi\in\Gal(F(N)/F(N/p))$ and $A\in E(F(N))$ such that $B= \psi A- A\in E_\tors$. 
Then $B \in E[Q(n)]$, where $Q(n)=p^2(p^2-1)$ if $n=1$ and $Q(n)=p^2$ otherwise. 
\end{lmm}

\begin{proof}
The order of $B$ is $N'=p^{n'}M'$ for some integers $n'\geq 0$ and $M'\in\N$ coprime to $p$. 
Put $T=[p^{n'}]B$ and note that $T$ has order $M'$.

The extension $F(MM')/F$ is unramified since $MM'$ is coprime to $p$. 
As $F(M)(T)$ is included in $F(MM')$, we infer that $F(M)(T)/F(M)$ is unramified. 
Moreover, $T\in E(F(N))$, which implies that $F(M)(T)/F(M)$ is totally ramified by Lemma \ref{lmm5.0}. 
In conclusion, $T\in E(F(M))$.
In particular, $T$ is fixed by $\psi$.

The order of $[M']B\in E(F(N))$ is $p^{n'}$. 
So Lemma \ref{lmm5.1} yields $[M']B\in E[p^n]$.
Hence, $[pM']B\in E[p^{n-1}]\subset E[N/p]$ is fixed by $\psi$ too. 

B\'ezout's identity tells us that $1=ap^{n'}+bM'$ for some integers $a,b\in\Z$. 
Then $B= [a]T+ [bM']B$ and by the foregoing, we conclude that $[p]B$ is fixed by $\psi$. 
Let $t$ be the order of $\psi$. 
A small calculation proves that $B\in E[pt]$ since\[[pt]B = (\psi^{t-1}+\dots+1)([p]B)=[p](\psi^{t-1}+\dots+1)(\psi A -A)=[p](\psi^t A- A)=0.\]
Lemma \ref{lmm2.1} $(iii)$ (when $n\geq 2$) and Lemma \ref{lmm2.1} $(v)$ (if $n=1$) prove the lemma. 
\end{proof}

Recall that $\hat{h}=\hat{h}_E  : E(\overline{K}) \to \R$ denotes the Néron-Tate height.
It is non-negative, invariant under the action of $\Gal(\overline{K}/K)$ and vanishes precisely at $E_\tors$. 
It is also quadratic, that is 
\[ \forall m\in \Z, \forall P\in E(\overline{K}), \; \hat{h}([m]P)=m^2\hat{h}(P).\]
This implies 
\[\forall P\in E(\overline{K}), \forall T\in E_\tors, \; \hat{h}(P+T)=\hat{h}(P).\]
Finally, it also satisfies the parallelogram law, that is 
\[\forall P,Q\in E(\overline{K}), \hat{h}(P+Q)+\hat{h}(P-Q)=2(\hat{h}(P)+\hat{h}(Q)).\]
For more information on $\hat{h}$, we refer to \cite[Chapter VIII, §9]{Silvermanarithmetic}. 

\begin{lmm} \label{lmm5.35}
Let $P\in E(\overline{K})$, and let $\sigma\in\Gal(\overline{K}/K)$. 
If $[n]\sigma P - [m]P\in E_\tors$ for some $n,m\in\N$ distinct, then $P\in E_\tors$. 
\end{lmm}

\begin{proof}
Some properties of $\hat{h}$ recalled above show that \[m^2\hat{h}(P)=\hat{h}([m]P)=\hat{h}([m]P+[n]\sigma P-[m]P)=n^2\hat{h}(\sigma P)=n^2\hat{h}(P)  \] and the lemma follows since $n,m\in\N$ are distinct. 
\end{proof}

Let $O$ be the neutral element of $E$.
For each place $w$ of a finite extension $M$ of $K$, denote by $\lambda_w : E(\overline{M_w})\backslash \{O\} \to \R$ the local N\'eron height function on $E$ associated to $w$. 
It can be described in a totally explicit way, see \cite[Chapter VI]{Silvermanadvanced}.
For the purpose of our text, we only need to know that if $E$ has good reduction at $w$, then $\lambda_w(P)= (1/2)\max\{0; \log \vert x(P)\vert_w\}$, where $x(P)$ is the first coordinate of a point $P\in E(\overline{M_w})$ with respect to some Weierstrass model of $E/K$ that we fix from now. 

Let $A\in E(M)$. 
If $\nu$ is a place of $K$, we define the partial height function at $\nu$ as \[ \hat{h}_\nu(A)=\frac{1}{[M : \Q]} \sum_{w\vert \nu} [M_w : \Q_w] \lambda_w(A), \] where $w$ ranges over all places of $M$ above $\nu$. 
It is well known that $\hat{h}_\nu$ does not depend on the choice of the finite extension $M/K(A)$. 
By \cite[Chapter VI, Theorem 2.1]{Silvermanadvanced}, we have $\hat{h}=\sum_\nu \hat{h}_\nu$ on $E(M)$, where $\nu$ runs over all places of $M$. 
Finally, put $\hat{h}_\infty$ the sum of all $\hat{h}_\nu$, where $\nu$ runs over all infinite places of $K$. 

\begin{lmm} \label{lmm5.4}
Take an integer $N\in\N$ with $p$-adic valuation $n\geq 1$.
If $A\in E(LK^{tv}(N))$ satisfies $[Q(n)]A\notin E(F(N/p))$, then there exists a non-torsion point $B\in E(\overline{K})$ with $\hat{h}(B)\leq 4\hat{h}(A)$ and \[\hat{h}_v(B)\geq l:=\frac{\log p}{2p^4[K:\Q][F(p): \Q_p]}.\]
\end{lmm}

\begin{proof}
Let $L'\subset LK^{tv}\subset F$ be a number field, Galois over $K$, such that $A\in E(L'(N))$. 
By hypothesis, there is $\psi\in \Gal(F(N)/F(N/p))\subset \Gal(L'(N)/L'(N/p))$ such that $\psi [Q(n)]A \neq [Q(n)]A$. 
Note that $B=\psi A-A\notin E_\tors$ by Lemma \ref{lmm5.2}.
Moreover, the parallelogram law implies $\hat{h}(B)\leq 2(\hat{h}(\psi A)+\hat{h}(A))=4\hat{h}(A)$. 

Prove the lower bound for $\hat{h}_v(B)$. 
Denote by $w_0$ the place of $L'(N)$ associated to the fixed embedding $\overline{K}\to \overline{K_v}$. 
Let $C$ be the centralizer of $\psi$ in $\Gal(L'(N)/K)$. 
Let $w\in C w_0=\{\psi w_0, \psi\in C\}$. 
Then $w=\sigma^{-1}w_0$ for some $\sigma\in C$, and so \[ \vert x(B)\vert_w= \vert x(B)\vert_{\sigma^{-1}w_0} = \vert x(\sigma(\psi A - A))\vert_{w_0} = \vert x(\psi\sigma A - \sigma A)\vert_v.\]
As $\sigma B = \psi\sigma A- \sigma A \neq O$, we get $\lambda_w(B)= \lambda_v(\psi\sigma A - \sigma A)$.

Check that $\psi$ lies in the ramification group $G_s(F(N)/F)$, where $s=p^{2(n-1)}-1$.
It is obvious when $n\geq 2$ thanks to Lemma \ref{lmm2.1} $(vi)$. 
If $n=1$, then it suffices to check that $F(N)/F(N/p)$ is totally ramified, which is true thanks to Lemma \ref{lmm5.0}.

Let $\mathfrak{P}$ be the maximal ideal of the ring of integers of $F(N)$. 
Then $\psi \sigma A$ and $\sigma A$ map to same element on $E$ reduced modulo $\mathfrak{P}^{p^{2(n-1)}}$. 
Thus, $\log \vert x(\psi\sigma A - \sigma A)\vert_v \geq (p^{2(n-1)}/e)\log p$, where $e$ denotes the ramification index of $F(N)/\Q_p$. 
By Lemma \ref{lmm2.1} $(i)$, we have $e\leq p^{2(n-1)}[F(p) : \Q_p]$. 
From all of this, we get \[\lambda_w(B)\geq \frac{\log p}{2[F(p):\Q_p]}\] for all $w\in Cw_0$. 
As $L'(N)/K$ is Galois, we have $[L'(N)_w : K_v]=[L'(N)_{w_0} : K_v]$ for all places $w$ of $L'(N)$ above $v$. 
In conclusion, \[\begin{aligned}
\hat{h}_v(B) &= \frac{[K_v : \Q_p][L'(N)_{w_0} : K_v]}{[L'(N) : \Q]}\sum_{w\vert v} \lambda_w(B) \geq \frac{[L'(N)_{w_0} : \Q_p]}{[L'(N) : \Q]} \sum_{w\in Cw_0} \lambda_w(B) \\ 
& \geq \frac{[L'(N)_{w_0} : \Q_p] }{[L'(N) : \Q]} \frac{\log p}{2[F(p) : \Q_p]}\#(Cw_0) \geq  \frac{\log p}{2p^4[K:\Q][F(p): \Q_p]}
\end{aligned},\] the last inequality coming from Lemma \ref{lmm2.5} applied to $L=L'(N)$ and $H=\Gal(L'(N)/L'(N/p))$, which has cardinality at most $p^4$. 
The lemma follows.    
\end{proof}

Let $\tilde{\Phi}\in \Gal(\overline{\kappa}/\kappa)$ be the Frobenius element, where $\kappa$ denotes the residual field of $F$.
By \cite[Chapter 13, Theorem 6.3]{Husemoller}, there are $k,m\in\N$ satisfying $\tilde{\Phi}^k = [p^m]$ on $\tilde{E}$, the reduction of $E$ modulo $v$.
As $F/\Q_p$ is unramified, $\tilde{\Phi}$ identifies with an element $\Phi\in\Gal(\Q_p^{ur}/F)$.

\begin{lmm} \label{lmm5.8}
Take $N\in\N$ divisible by $p^2-1$, but not by $p^2$. 
If $A\in E(LK^{tv}(N))\backslash E_\tors$, then there is $B\in E(\overline{K})\backslash E_\tors$ with $\hat{h}(B)\leq 4p^{2m+8}\hat{h}(A)$ and $\hat{h}_v(B)\geq l$.
\end{lmm}

\begin{proof}
By replacing $N$ with $pN$ if needed, we can assume that $p\vert N$ and $p^2\nmid N$.

Let $M\subset K^{tv}$ be a number field, Galois over $K$, such that $A\in E(LM(N))$. 
Suppose that some conjugate $A'$ of $A$ over $K$ satisfies $[p^2(p^2-1)]A'\notin E(F(N/p))$. 
Then the lemma is a trivial consequence of Lemma \ref{lmm5.4} applied to $A=A'$.
So assume that $\sigma A'=[p^2(p^2-1)]\sigma A \in E(F(N/p))$ for all $\sigma\in\Gal(LM(N)/K)$, where $A'=[p^2(p^2-1)] A$. 
We can apply Lemma \ref{lmm5.25} to the coordinates of $A'$ with respect to our fixed Weierstrass model to find that $A'$ actually lies in $E(LM(N/p))$. 

The extension $F(N/p)/F$ is unramified since $p$ does not divides $N/p$.
By abuse of notation, we call $\Phi$ again the restriction of $\Phi$ to $F(N/p)$.
As $N/p$ is coprime to $p$, we have $E[N/p]\simeq \tilde{E}[N/p]$. 
As $\tilde{\Phi}^k = [p^m]$, we deduce from the last isomorphism that $\Phi^k$ acts on $E[N/p]$ as multiplication by $[p^m]$. 
By Lemma \ref{lmm4.2} applied to $N=N/p$, we conclude that $\Phi^k$ belongs to the center of $\Gal(LM(N/p)/K)$.

Put $B=\Phi^k A' -[p^m]A'$, which is non-zero by Lemma \ref{lmm5.35}.
We have \[\hat{h}(B)\leq 2(\hat{h}(\Phi^k A')+\hat{h}([p^m]A'))= 2(1+p^{2m})\hat{h}(A')\leq 4p^{2m+8}\hat{h}(A).\] 
Denote by $v_0$ the place of $LM(N/p)$ associated to the fixed embedding $\overline{K} \to \overline{K_v}$. 
Let $w$ be a place of $LM(N/p)$ above $v$.
There is $\sigma\in \Gal(LM(N/p)/K)$ such that $w=\sigma^{-1} v_0$.
A small calculation gives \[\lambda_w(B)=\frac{1}{2}\log\max\{1, \vert x(B)\vert_w\}= \frac{1}{2}\log\max\{1, \vert x(\sigma B)\vert_{v_0}\}= \lambda_v(\sigma B). \]
As $\Phi^k$ commutes with $\sigma$, we get $\sigma B= \Phi^k \sigma A' - [p^m] \sigma A'\neq O$.
However, it is clear that $\sigma B$ reduces to $O$ modulo $v$. 
Thus $\vert x(\sigma B)\vert_v \geq p$ since $F(N/p)/\Q_p$ is unramified. 
In conclusion, $\lambda_w(B)\geq (\log p)/2$ for all places $w$ of $LM(N/p)$ above $v$ and the definition of $\hat{h}_v$ leads to $\hat{h}_v(B)\geq (\log p)/2\geq l$, which finishes the proof.
\end{proof}

\begin{prop} \label{prop2}
Let $A\in E(LK^{tv}(E_\tors))\backslash E_\tors$.
Then there is $B\in E(\overline{K})\backslash E_\tors$ with $\hat{h}(B)\leq 16D^2p^{2m+8}\hat{h}(A)$ and $\hat{h}_v(B)\geq l$, where $D=2^{[F:\Q_{p^2}](p^2-1)}$.
\end{prop}

\begin{proof}
There are an integer $N\in\N$ divisible by $p^2-1$ and a number field $M\subset K^{tv}$, Galois over $K$, such that $A\in E(LM(N))$. 
Put $L'=LM$ and write $n$ for the $p$-adic valuation of $N$. 
Let $\tau=\tau_2$ be the homomorphism coming from Lemma \ref{lmm2.6} and set $C= \tau A - [D]A\in E(L'(N))$.
It is not a torsion point by Lemma \ref{lmm5.35}. 
Moreover, the parallelogram equality and other basic properties of the N\'eron-Tate height give \[\hat{h}(C)\leq 2(\hat{h}(\tau A) +\hat{h}([D]A))\leq 4D^2\hat{h}(A). \]
Let $n'\geq 0$ be the least integer such that $C\in E(L'(p^{n'-n}N))$. 
Of course, $n'\leq n$. 

If $n'\leq 1$, then Lemma \ref{lmm5.8} applied to $A=C$ provides a non-torsion point $B\in E(\overline{K})$ satisfying $\hat{h}(B)\leq 4p^{2m+8}\hat{h}(C)\leq 16 D^2p^{2m+8}\hat{h}(A)$ and $\hat{h}_v(B)\geq l$, which proves the proposition. 
So assume that $n'\geq 2$. 
By minimality of $n'$ and by Lemma \ref{lmm5.25}, there exists $\sigma\in\Gal(L'(N)/K)$ such that $C'=\sigma C \notin E(F(N'/p))$, where $N'=p^{n'-n}N$. 
Choose $\psi\in \Gal(F(N)/F(N'/p))$ such that $\psi C' \neq C'$. 

Set $A'=\sigma A$. 
As $\tau$ and $\sigma$ commute by Lemma \ref{lmm4.2}, we obtain \[ C'= \tau A' -[D]A' \in E(L'(N')).\]
To deduce the proposition, it suffices to apply Lemma \ref{lmm5.4} to $A=C'$ and $N=N'$. 
For this, we only need to show that $[p^2]C' \notin E(F(N'/p))$. 
Suppose that the contrary is true and derive a contradiction. 
Then $\psi C' - C' = T \in E[p^2]\backslash \{O\}$.
As $\psi$ and $\tau$ commute by Lemma \ref{lmm4.2}, it follows from the definition of $C'$ that \[C'+ T = \psi C' = \tau\psi A' - [D]\psi A'.  \]
A short calculation proves that $T=\tau P - [D]P$, where $P=\psi A' - A'\in E(L'(N))$. 
By Lemma \ref{lmm5.35}, $P$ is a torsion point. 
We fix $M'\in\N$ coprime to $p$ such that $[M']P$ has order a power of $p$. 
By Lemma \ref{lmm5.1}, $[M']P\in E[p^n]$ and Lemma \ref{lmm2.6} $(ii)$ ensures us that $\tau([M']P)= [DM']P$. 
Hence, $[M']T = [p^2]T=O$, which is possible only if $T=O$ since $M'$ and $p$ are coprime, a contradiction.  
\end{proof}

A proof of the next lemma can be found in \cite[Lemme 4.4]{Plessisiso}.

\begin{lmm} \label{lmm5.10}
Let $(Q_n)$ be a sequence of $E(\overline{K})\backslash E_\tors$ such that $\hat{h}(Q_n)\to 0$. 
Then $\liminf_{n\to \infty} \hat{h}_\infty(Q_n)\geq 0$. 
We also have $\liminf_{n\to \infty} \hat{h}_\nu(Q_n)\geq 0$ if $\nu$ is a finite place of $K$.
More precisely, $\hat{h}_\nu(Q_n)\geq 0$ for all $n$ if $E$ has good reduction at $\nu$.
\end{lmm}

\textit{Proof of Theorem \ref{thm2}, elliptic case:} Assume by contradiction that there exists a sequence $(A_n)$ of non-torsion points in $E(LK^{tv}(E_\tors))$ with $\hat{h}(A_n)\to 0$.
Proposition \ref{prop2} yields a new sequence $(B_n)$ of non-torsion points in $E(\overline{K})$ such that $\hat{h}(B_n)\to 0$ and $\hat{h}_v(B_n) \geq l$ for all $n$.
Lemma \ref{lmm5.10} shows that \[ \hat{h}(B_n) = \sum_\nu \hat{h}_\nu(B_n) \geq \hat{h}_v(B_n)+\hat{h}_\infty(B_n)+\sum_{\nu\in\mathcal{M}} \hat{h}_\nu(B_n) \geq l+\hat{h}_\infty(B_n)+\sum_{\nu\in\mathcal{M}} \hat{h}_\nu(B_n), \] where $\mathcal{M}$ is the (finite) set of places of $K$ with bad reduction. 
Again, Lemma \ref{lmm5.10} allows us to conclude that $\liminf_{n\to \infty} \hat{h}(B_n) \geq l$, a contradiction. \qed 

\bibliographystyle{plain}

 \end{document}